\definecolor{main_color}{HTML}{2789d6}
\newtheorem{theorem}{Theorem}
\newtheorem{definition}{Definition}
\newtheorem{lemma}{Lemma}
\newtheorem{corollary}{Corollary}
\newtheorem{proposition}{Proposition}
\DeclareMathOperator{\rank}{rank}
\DeclareMathOperator{\lin}{lin}
\DeclareMathOperator{\supp}{supp}
\DeclareMathOperator{\sign}{sign}
\newcommand{\R}{\mathbb{R}}
\newcommand{\Q}{\mathbb{Q}}
\newcommand{\Z}{\mathbb{Z}}
\newcommand{\N}{\mathbb{N}}
\newcommand{\bA}{\bm{A}}
\newcommand{\bx}{\bm{x}}
\newcommand{\bh}{\bm{h}}
\newcommand{\bB}{\bm{B}}
\newcommand{\ba}{\bm{a}}
\newcommand{\br}{\bm{r}}
\newcommand{\by}{\bm{y}}
\newcommand{\bz}{\bm{z}}
\newcommand{\bc}{\bm{c}}
\newcommand{\bb}{\bm{b}}
\newcommand{\bs}{\bm{s}}
\newcommand{\bw}{\bm{w}}
\newcommand{\bt}{\bm{t}}
\newcommand{\bD}{\bm{D}}
\newcommand{\bW}{\bm{W}}
\newcommand{\bT}{\bm{T}}
\newcommand{\bU}{\bm{U}}
\newcommand{\bH}{\bm{H}}
\newcommand{\blambda}{\bm{\lambda}}
\newcommand{\poly}{\mathcal{P}}
\newcommand{\face}{\mathcal{F}}
\newcommand{\upperbound}{\left\lfloor \frac{\Delta-1}{2}\right\rfloor \cdot \left(\Delta - 1\right) + \bm{1}_{2\Z}(\Delta)}
\newcommand{\upperboundtext}{\lfloor (\Delta-1)/2\rfloor \cdot (\Delta - 1) + \bm{1}_{2\Z}(\Delta)}
\title{A Threshold Phenomenon for the Shortest Lattice Vector Problem in the Infinity Norm}
\author{Stefan Kuhlmann\thanks{ETH Zürich, Department of Mathematics, Institute for Operations Research, Switzerland, \texttt{\{stefan.kuhlmann,robert.weismantel\}@ifor.math.ethz.ch}} \and Robert Weismantel\footnotemark[1]}
\date{ }
\begin{document}
	\maketitle
	
	\begin{abstract}
		\noindent
		One important question in the theory of lattices is to detect a shortest vector: given a norm and a lattice, what is the smallest norm attained by a non-zero vector contained in the lattice? 
		We focus on the infinity norm and work with lattices of the form $\bA\Z^n$, where $\bA$ has integer entries and is of full column rank. Finding a shortest vector is NP-hard \cite{vanEmdeBoas81}. 
		We show that this task is fixed parameter tractable in the parameter $\Delta$, the largest absolute value of the determinant of a full rank submatrix of $\bA$. 
		The algorithm is based on a structural result that can be interpreted as a threshold phenomenon: whenever the dimension $n$ exceeds a certain value determined only by $\Delta$, then a shortest lattice vector attains an infinity norm value of one. This threshold phenomenon has several applications. In particular, it reveals that integer optimal solutions lie on faces of the given polyhedron whose dimensions are bounded only in terms of $\Delta$.
	\end{abstract}
	
	\section{Introduction}
	\label{sec_intro}
	A fundamental algorithmic problem in the geometry of numbers with numerous applications to other areas of mathematics and computer science is the shortest lattice vector problem. 
	This problem is easy to state. Let $\bA \in \Z^{m \times n}$ be a matrix of full column rank. The lattice $\bA\Z^n$ consists of all integral combinations of the columns of $\bA$, i.e., 
	$\bA\Z^n := \left\{ \by \in \Z^m : \by = \bA\bx \text{ for } \bx \in \Z^n\right\}.$
	Given such a matrix $\bA \in \Z^{m \times n}$ of full column rank and a norm $\| \cdot\|$, a natural question is to determine a non-zero vector in the lattice $\bA\Z^n$ that attains the smallest norm: $\min_{\bz\in\Z^n\backslash\lbrace \bm{0}\rbrace}\Vert\bA\bz\Vert$.
	In this paper, the underlying norm is the infinity norm, which is given by $\| \bx \|_\infty  := \max\{|x_1|, |x_2|,\ldots,|x_n|\}$ for
	$\bx = (x_1,\ldots,x_n)^\top \in \R^n$. Then the shortest vector problem in the infinity norm is as follows
	\begin{align}
		\tag{SVP}
		\label{svp}
		\min_{\bz\in\Z^n\backslash\lbrace \bm{0}\rbrace} \Vert \bA\bz \Vert_\infty.
	\end{align}
	It has been shown by van Emde Boas that \eqref{svp} is NP-hard \cite{vanEmdeBoas81}. Hardness and algorithmic results for \eqref{svp} and approximate versions of \eqref{svp} play an important role in lattice-based cryptography; see \cite{Micciancio2009} for a comprehensive survey on the topic. 
	Many algorithms for solving the shortest lattice vector problem exactly or approximately have been devised
	over the past decades. It is beyond the scope of this paper to discuss the literature in detail. Rather let us refer to several milestones and references on the topic.
	
	It began with the remarkable work of Lenstra, Lenstra, Lov\'asz \cite{LLL}, who introduced the notion of a ``reduced basis''. One important property of such a basis is that its first vector is approximately a shortest lattice vector with respect to the Euclidean norm. This provides a bound on the length of a shortest vector that can be utilized  by an exhaustive search procedure to solve \eqref{svp} in the Euclidean norm in running time $2^{\mathcal{O}(n^3)}$. Kannan \cite{Kannan2,Kannan1} refined this approach and improved the running time for computing a shortest lattice vector significantly to $2^{\mathcal{O}(n \log n)}$; see also \cite{A22,A26,A28,A48} for further modifications and improvements of Kannan's algorithm.
	The first randomized algorithm to compute a shortest lattice vector in single exponential time in the Euclidean norm was developed by Ajtai, Kumar, Sivakumar \cite{AjtaiKumarSivakumar}. The authors introduced the technique of ``randomized sieving''. 
	Roughly speaking, their method samples exponentially many lattice points and combines them to generate shorter vectors with positive probability; see \cite{Eisenbrand,Regev} for excellent surveys on this topic.
	The randomized sieving technique has been the subject of intensive investigation. In recent years, it has been modified, generalized, improved and can be applied to arbitrary norms. It gives exact randomized algorithms for \eqref{svp} that run in single exponential time and require single exponential space; see \cite{BlomerNaewe,MV2,A49,A50}, \cite{AggarwalMukhopadhyay} for the fastest algorithm in the infinity norm, and \cite{Aggarwaletal,AggarwalDadushRegevStephens-Davidowitz2015,AggarwalStephens-Davidowitz2017} for state-of-the art results in the Euclidean Norm based on ``discrete Gaussian sampling''. 
	Another appealing approach to solve \eqref{svp} in the Euclidean norm was proposed by Micciancio and Voulgaris \cite{MV1}. Their method is based on the ``Voronoi cell'' of a lattice. It provides us with the first deterministic single exponential time algorithm for \eqref{svp} in the Euclidean norm.
	
	The methods to tackle \eqref{svp} discussed so far all measure complexity in terms of the dimension $n$. In this paper, we ask the following: can we say something more when we also fix the parameter $\Delta$ defined to be the largest absolute value among all full rank subdeterminants of the given matrix $\bA$? In other words, we fix a constant $\Delta$ and work with lattices defined by $\Delta$-modular matrices. This is made formal below.	
	\begin{definition}
		\label{def_delta}
		A matrix $\bA \in \Z^{m \times n}$ of full column rank is called $\Delta$-modular if $\left|\det\bB\right|\leq\Delta$ for all full rank submatrices $\bB$ of $\bA$ and there exists at least one full rank submatrix $\bB$ of $\bA$ such that $\left|\det\bB\right|=\Delta$.
	\end{definition}
	If a lattice $\bA\Z^n$ has a basis given by a $\Delta$-modular matrix, then this property is intrinsic to the lattice, since changing the basis of $\bA\Z^n$ corresponds to applying a unimodular transformation from the right, see \cite[Section 4]{schrijvertheorylinint86}, which preserves full rank subdeterminants. 
	
	It is a major open problem in integer programming whether a linear discrete optimization problem is solvable in polynomial time when the underlying matrix is $\Delta$-modular and $\Delta$ is constant; see \cite{nearly_tu,artweiszenbimodalgo2017,polynomial_ring,two_nonzeros_per_row,two_non_zeros_most_rows,naegele2024advancesstrictlydelta,naegelesanzencongruence2023} for some partial progress in this direction. Inspired by this question, we ask ourselves whether \eqref{svp} is solvable in polynomial time when the lattice $\bA\Z^n$ is determined by a $\Delta$-modular matrix $\bA$ and $\Delta$ is constant. We are not aware of any FPT algorithm for \eqref{svp} parameterized solely by $\Delta$ prior to this work. The only available results also parameterize by $m$ \cite{fpt_gribanov,fpt_gribanov_2}.  
	Our algorithmic result below is possible because of a threshold phenomenon:
	if the dimension $n$ is sufficiently large, then for a $\Delta$-modular matrix $\bA$ a shortest non-zero lattice vector $\bA\bz$ attains the smallest possible value $\|\bA\bz\|_{\infty}=1$. To make this more formal, let $\mathcal{M}_{\Delta}$ denote the set of $\Delta$-modular matrices with full column rank. We consider the function
	\begin{align}
		\label{def_f_delta}
		f(\Delta) := \sup_{\bA\in\mathcal{M}_{\Delta}}\left\lbrace n\in \N :  \Vert \bA\bz \Vert_\infty \geq 2 \; \forall \bz\in\Z^n\backslash\lbrace \bm{0}\rbrace \right\rbrace.
	\end{align}
	It turns out that one can replace the supremum in \eqref{def_f_delta} by a maximum. In fact, one can formulate a quantitative upper bound on $f(\Delta)$ depending on the indicator function $\bm{1}_{2\Z}(\Delta)$, which equals $1$ if $\Delta$ is even and $0$ if $\Delta$ is odd. 
	\begin{theorem}
		\label{thm_shortest_vector}
		The value $f(\Delta)$ is finite for all $\Delta \in \N_{\geq 1}$. In particular, one has
		\begin{align*}
			\Delta - 1\leq f(\Delta)\leq\upperbound.
		\end{align*}
	\end{theorem}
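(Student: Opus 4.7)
The plan is to establish the two bounds separately, since they require rather different techniques.

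\textbf{Lower bound.} To show $f(\Delta) \geq \Delta - 1$, I would construct an explicit $\Delta$-modular matrix $\bA$ with $n = \Delta - 1$ columns whose lattice contains no nonzero vector of infinity norm at most one. The template is to pick a rank-$(\Delta-1)$ sublattice $L \subseteq \Z^{\Delta-1}$ of index $\Delta$ and take $\bA$ to have sufficiently many rows drawn from $L$: any set of $\Delta-1$ linearly independent vectors in $L$ has determinant a nonzero multiple of $\Delta$, so ensuring that every such subset realizes exactly $\pm\Delta$ makes $\bA$ $\Delta$-modular. The rows must additionally be chosen so that for every nonzero $\bz \in \Z^{\Delta-1}$ some row $\br_i$ satisfies $|\br_i \cdot \bz| \geq 2$. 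As a sanity check, for $\Delta = 3$ the matrix $\bA = \begin{pmatrix} 1 & 2 \\ 2 & 1 \\ 1 & -1 \end{pmatrix}$ works: all three $2 \times 2$ minors equal $-3$, and since $(\bA\bz)_1 + (\bA\bz)_2 = 3(z_1+z_2)$, any candidate short vector forces $z_2 = -z_1$, whence the third row gives $2z_1$, which rules out $z_1 \neq 0$.

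\textbf{Upper bound.} For the more delicate upper bound, my plan is a pigeonhole argument. Assume toward contradiction that $\bA \in \mathcal{M}_\Delta$ has $n > \lceil(\Delta-1)/2\rceil (\Delta-1)$ columns $\ba_1,\ldots,\ba_n$ and $\|\bA\bz\|_\infty \geq 2$ for every nonzero $\bz$. In particular, for any two columns $\ba_j, \ba_k$ the lattice vector $\bA(e_j-e_k) = \ba_j - \ba_k$ has some entry of absolute value at least $2$. The strategy is to assign each column a signature drawn from a set of size at most $\lceil(\Delta-1)/2\rceil(\Delta-1)$, engineered so that two columns sharing a signature necessarily differ by a vector in $\{-1,0,1\}^m$; pigeonhole then yields an immediate contradiction. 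A natural starting point is to fix a nonsingular $n\times n$ submatrix $\bB$ of $\bA$ and, for each $\ba_j$, to record its residue class modulo $\bB\Z^n$. Since $|\det\bB| \leq \Delta$, this quotient has at most $\Delta$ cosets. The factor $\lceil(\Delta-1)/2\rceil$ presumably arises by refining via a second invariant, while identifying $\ba_j$ with $-\ba_j$ (which corresponds to flipping the sign of $e_j$) and using the no-short-vector hypothesis to exclude certain classes that would otherwise already produce forbidden combinations.

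\textbf{Main obstacle.} The principal difficulty is designing the signature so that two identical signatures \emph{force} the difference into $\{-1,0,1\}^m$ rather than into some larger bounded set. A naive pigeonhole on residues modulo $\Delta$ or on sign patterns yields only $\|\ba_j-\ba_k\|_\infty \leq 2\Delta$, which is far too weak. Closing this gap should require bootstrapping: using the no-short-vector assumption itself to refine the signature in rows beyond the chosen submatrix, and then violating it once the signature space has been reduced to the claimed $\lceil(\Delta-1)/2\rceil(\Delta-1)$ classes. The $\Delta$-modularity must be exploited essentially through the interaction of full-rank $(\Delta-1)$-subdeterminants with pairs of columns, rather than merely through bounds on matrix entries, and striking this balance to obtain a sharp quadratic bound is the technical heart of the statement.
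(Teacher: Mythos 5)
Your lower bound is only a template. The $\Delta=3$ instance checks out, and the idea of drawing all rows from an index-$\Delta$ sublattice of $\Z^{\Delta-1}$ is sound (the paper realizes exactly this by setting $\bA=\bT\cdot\bB$ with $\bT$ an arc--node incidence matrix and $\det\bB=\Delta$), but for general $\Delta$ you still owe a concrete choice of rows that simultaneously makes every nonsingular $(\Delta-1)\times(\Delta-1)$ minor equal to $\pm\Delta$ \emph{and} gives every nonzero $\bz$ a row with $|\br_i^\top\bz|\geq 2$; asserting that such rows ``must be chosen'' is not a construction.

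The upper bound has a genuine gap, which you locate but do not close. Two things go wrong. First, you pigeonhole the columns $\ba_j$ of $\bA$; these live in $\Z^m$ and can have arbitrarily large entries, so no signature on them can force differences into $\{-1,0,1\}^m$, and your residue classes modulo $\bB\Z^n\subseteq\Z^n$ do not even typecheck against $\ba_j\in\Z^m$. The right objects are the columns $\br_1,\ldots,\br_n$ of $\bB^{-1}$ for a submatrix $\bB$ with $|\det\bB|=\Delta$ maximal: by Lemma~\ref{lemma_submatrix_determinants} every subdeterminant of $\bA\cdot\bB^{-1}$ is at most $1$ in absolute value (Corollary~\ref{cor_bounded_subdeterminants}), so $-\bm{1}\leq\bA\br_i\leq\bm{1}$ comes for free and any integral difference $\br_i-\br_j$ automatically satisfies $\bA(\br_i-\br_j)\in[-2,2]^m$; the entire problem reduces to excluding entries equal to $\pm 2$. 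Second, pairwise differences alone cannot suffice, and no refinement of the signature will fix this: the paper instead extracts $\Delta$ vectors $\br_1,\ldots,\br_\Delta$ lying in a single residue class of $\bB^{-1}\Z^n/\Z^n$ (identifying $\br_i$ with $-\br_i$ leaves $\lceil(\Delta-1)/2\rceil$ sign-pairs of nonzero classes, and demanding $\Delta$ vectors in one of them is exactly what produces the bound $\lceil(\Delta-1)/2\rceil\cdot(\Delta-1)$ --- not a ``second invariant''), and then runs a rigidity argument on $2\times 2$ subdeterminants of $\bA\cdot\bB^{-1}$ showing that if every consecutive difference $\br_i-\br_{i+1}$ is cut off by some row, those rows form a bidiagonal $\pm1$ pattern whose $\Delta\times\Delta$ completion forces the extra test vector $\br_1+\cdots+\br_\Delta$ to satisfy $-\bm{1}\leq\bA\bx\leq\bm{1}$. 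The sum vector and the subdeterminant rigidity are the missing ideas; without them the ``bootstrapping'' you gesture at has no reason to terminate.
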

	For $\Delta\in\lbrace 1,2,3\rbrace$, the upper and lower bound match. When $\Delta = 4$, the first deviation between the upper and lower bound appears with $f(\Delta) \in \lbrace 3,4\rbrace$. 
	Theorem~\ref{thm_shortest_vector} can also be interpreted as a variant of Minkowski's convex body theorem parameterized in $\Delta$: Define $\mathcal{Q}:=\lbrace \bx\in\R^n : -\bm{1}\leq\bA\bx\leq\bm{1}\rbrace$. Whenever $n\geq f(\Delta) + 1$, the convex body $\mathcal{Q}$ contains a non-zero integer vector.
	
	The proof of Theorem~\ref{thm_shortest_vector} is constructive. It gives rise to an algorithm that computes the shortest non-zero lattice vector in the lattice $\bA \Z^n$ with respect to the infinity norm,
	which runs in polynomial time in $m$ and $n$ for fixed $\Delta$.
	
	\begin{theorem}
		\label{thm_svp_algorithm}
		Let $\Delta\in\N_{\geq 1}$, $\bA\in\Z^{m\times n}$ of full column rank, and suppose that $n\geq \upperboundtext + 1$. Then one can find $\bz^*\in\Z^n\backslash\lbrace \bm{0}\rbrace$ such that $\Vert\bA\bz^*\Vert_\infty = 1$ or return a full rank submatrix $\bB$ of $\bA$ with $\left|\det\bB\right| > \Delta$. This can be done
		in $\mathcal{O}(mn^2\Delta^3)$ time.
	\end{theorem}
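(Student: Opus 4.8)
The plan is to derive the algorithm from the constructive proof of the upper bound in Theorem~\ref{thm_shortest_vector}, augmented by a subdeterminant check that produces the required certificate whenever $\bA$ turns out not to be $\Delta$-modular. The starting observation is that the bound $\lceil(\Delta-1)/2\rceil(\Delta-1)$ is nondecreasing in $\Delta$, so $\lceil(\Delta-1)/2\rceil(\Delta-1)+1 \ge f(\Delta')+1$ for every $\Delta'\le\Delta$. Hence, under the hypothesis on $n$, \emph{if} all full rank subdeterminants of $\bA$ are at most $\Delta$ then Theorem~\ref{thm_shortest_vector} guarantees a nonzero $\bz^\ast\in\Z^n$ with $\|\bA\bz^\ast\|_\infty = 1$; and since $\bA$ has full column rank and integer entries, $\bA\bz^\ast$ is a nonzero integer vector, so $\|\bA\bz^\ast\|_\infty\ge 1$ always holds, which makes such a $\bz^\ast$ automatically optimal for \eqref{svp}. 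It therefore suffices to (i) turn the existence proof into a procedure that actually outputs $\bz^\ast$, and (ii) intercept every appeal to $\Delta$-modularity: the structural argument should use it only through inequalities $|\det\bB|\le\Delta$ for explicit full rank submatrices $\bB$, so each such appeal is replaced by evaluating $|\det\bB|$ and, if the bound fails, halting and returning that $\bB$.

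For step (i) I would first normalize $\bA$ by unimodular column operations; these fix the lattice $\bA\Z^n$ (hence \eqref{svp} and its set of optimal solutions) and preserve every full rank subdeterminant. A Hermite-normal-form-type reduction already pins down the $n\times n$ ``core'' of the reduced matrix as lower triangular with diagonal product at most $\Delta$, so all but $\mathcal{O}(\log\Delta)$ of its diagonal entries equal $1$; the few ``non-unimodular'' columns, together with the reduced off-diagonal data, are then combined according to the case distinction in the proof of Theorem~\ref{thm_shortest_vector} until a vector in $\{-1,0,1\}^m\setminus\{\bm{0}\}$ is reached. Each elementary step should inspect only $\mathcal{O}(\Delta)$ columns and evaluate determinants of submatrices of size $\mathcal{O}(\Delta)$, i.e.\ cost $\mathcal{O}(\Delta^3)$; accounting for $\mathcal{O}(n^2)$ such steps and a cost of $\mathcal{O}(m)$ for touching a column of $\bA$ would yield the claimed $\mathcal{O}(mn^2\Delta^3)$. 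The integer entries stay polynomially bounded because the only transformations applied are those prescribed by the normal form and by the short combinations coming out of the structural argument.

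Step (ii) is mostly bookkeeping, with one point that needs care: after column operations a violating submatrix $\bB$ is found inside $\bA\bU$ rather than inside $\bA$. Since $\bU$ is unimodular, selecting the same rows and the corresponding columns in $\bA$ yields a submatrix of the original matrix with the same absolute determinant, so that is what we output; alternatively one can arrange to test only submatrices of $\bA$ itself, using that full rank subdeterminants are invariant under the column operations.

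I expect the real obstacle to be step (i): the proof of Theorem~\ref{thm_shortest_vector} may establish existence by a counting or extremal-case argument that is not transparently algorithmic, and even once it is made constructive, keeping the $\Delta$-dependence polynomial — rather than incurring an exponential blow-up from, say, searching over the sign patterns of the non-unimodular columns — is the delicate part. By contrast, the normalization, the optimality certificate $\|\bA\bz^\ast\|_\infty=1$, and the determinant checks are routine once the constructive core is in place.
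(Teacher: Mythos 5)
Your proposal correctly identifies the high-level structure (make the existence proof constructive, intercept every appeal to $\Delta$-modularity with an explicit determinant check), and the optimality certificate $\|\bA\bz^\ast\|_\infty=1$ is right. But there is a genuine gap at the point you yourself flag as ``the real obstacle,'' and it is not merely a matter of polishing: the proof of Theorem~\ref{thm_shortest_vector} starts from an $n\times n$ submatrix $\bB$ of $\bA$ with $|\det\bB|=\Delta$, i.e.\ a \emph{maximum}-determinant submatrix, and everything is built on the columns of $\bB^{-1}$. Finding such a submatrix (or even approximating the largest subdeterminant) in polynomial time for fixed $\Delta$ is a major open problem, and your HNF reduction does not supply it: the diagonal product of the column Hermite normal form of $\bA$ is $\gcd\bA$, the greatest common divisor of the $n\times n$ subdeterminants, not their maximum. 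So after normalizing, you still do not know which $n\times n$ block plays the role of $\bB$, and naively searching for it would be exponential.

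The paper's algorithm sidesteps exactly this obstacle, and that is the idea missing from your sketch. It initializes with an \emph{arbitrary} invertible $n\times n$ submatrix $\bB_{(0)}$ and runs the argument of Theorem~\ref{thm_shortest_vector} \emph{as if} $\bB_{(l)}$ were maximal. If all the test vectors in $T_{(l)}$ (the pairwise differences and the sum of a residue-class of columns of $\bB_{(l)}^{-1}$) fail the bound $-\bm{1}\leq\bA\bt\leq\bm{1}$, the very rows of $\bA$ that witness the failure assemble into a submatrix of $\bA\bB_{(l)}^{-1}$ whose determinant exceeds $1$ in absolute value; Lemma~\ref{lemma_submatrix_determinants} then converts this into a row swap giving a new invertible submatrix $\bB_{(l+1)}$ with $|\det\bB_{(l+1)}|\geq |\det\bB_{(l)}|+1$. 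Since the determinant is a positive integer bounded by $\Delta$ (or else a violating submatrix is returned as the certificate), at most $\Delta$ iterations occur, yielding the $\mathcal{O}(mn^2\Delta^3)$ bound after accounting for one Gauss--Jordan inversion and $\mathcal{O}(\Delta^2)$ test-vector checks of cost $\mathcal{O}(mn)$ each per iteration. In short, the decisive insight is that the proof of Theorem~\ref{thm_shortest_vector} is already self-improving when run on a non-maximal $\bB$, and Lemma~\ref{lemma_submatrix_determinants} is the tool that makes the improvement explicit; without this, your plan has no way to reach a suitable $\bB$ in polynomial time.
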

	Theorem~\ref{thm_svp_algorithm} implies an FPT algorithm for \eqref{svp} parameterized by $\Delta$: Given some matrix $\bA\in\Z^{m\times n}$, one can first compute the Hermite normal form in (strongly) polynomial time, cf. \cite{kannan_bachem_hnf_polytime}, to verify that $\bA$ has full column rank or restrict to a submatrix of full column rank. The next step is to check whether this submatrix has rank at most $\upperboundtext + 1$ or not. In the latter case, one runs the algorithm that provides a proof of Theorem~\ref{thm_svp_algorithm}. Otherwise, one can use an algorithm that solves \eqref{svp} in polynomial time when $n$ is a constant. It will become evident in Section~\ref{sec_algorithm} that the FPT algorithm uses polynomial space. The exponents in the running time stated in Theorem~\ref{thm_svp_algorithm} can be improved by incorporating fast algorithms for Gauss-Jordan elimination and matrix multiplication. In fact, Theorem~\ref{thm_svp_algorithm} does not require $\Delta$ to be constant. It also applies when $\Delta \leq \sqrt{2n}$.
	
	Theorem~\ref{thm_shortest_vector} has applications beyond algorithmic results. It can be used to derive novel results in the theory of integer programming.  
	However, to apply Theorem~\ref{thm_shortest_vector} to problems in integer programming requires us to assume that $f(\Delta)$ is a monotonically increasing function. It is not clear  whether this is true. Hence, we replace $f(\Delta)$ by the following canonical monotonically increasing function
	\begin{align}
		\label{def_big_F}
		F(\Delta) := \max_{i\in\lbrack \Delta\rbrack}f(i).
	\end{align}
	If one can show that $f$ is monotonically increasing, then one can replace $F(\Delta)$ by $f(\Delta)$ in all our subsequent results. Observe that 
	\begin{align}
		\label{bound_def_F}
		\Delta - 1\leq F(\Delta)\leq \upperbound
	\end{align}
	remains true. 
	Given a polyhedron $\poly=\lbrace\bx\in\R^n : \bA\bx\leq\bb\rbrace$, then Theorem~\ref{thm_shortest_vector} implies that the vertices of the integer hull of $\poly$, lie on a face of $\poly$ whose dimension is bounded by $F(\Delta)$.
	
	\begin{theorem}
		\label{thm_vertices_integer_hull}
		Let $\bA\in\Z^{m\times n}$ be $\Delta$-modular, $\bb\in\Z^m$, and $\poly=\lbrace\bx\in\R^n : \bA\bx\leq\bb\rbrace$. Then the vertices of the convex hull of $\poly\cap\Z^n$ lie on faces of $\poly$ of dimension at most $F(\Delta)$.
	\end{theorem}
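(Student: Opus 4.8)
The plan is to prove the stronger statement that for every vertex $\bv$ of $\conv(\poly\cap\Z^n)$ the \emph{minimal} face $\face$ of $\poly$ containing $\bv$ satisfies $\dim\face\le F(\Delta)$; since $\bv$ lies on $\face$, this is what the theorem asks for. Partition the rows of the system $\bA\bx\le\bb$ into those tight at $\bv$, written $\bA_{=}\bx\le\bb_{=}$, and the remaining ones, $\bA_{<}\bx\le\bb_{<}$. Dropping redundant rows, I may assume $\bA_{=}$ has full row rank $r$, so that $d:=\dim\face=n-r$; I may also assume $d\ge 1$, as otherwise there is nothing to prove. Since $\bv$ lies in the relative interior of $\face$ and $\bA,\bb$ are integral, the strict inequalities $\bA_{<}\bv<\bb_{<}$ improve to $\bA_{<}\bv\le\bb_{<}-\bm{1}$; set $\bc:=\bb_{<}-\bA_{<}\bv$, an integral vector with $\bc\ge\bm{1}$.

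Next I would set up an integral parametrization of $\face$. Choose $\bW\in\Z^{n\times d}$ whose columns form a $\Z$-basis of $\{\bz\in\Z^n:\bA_{=}\bz=\bm{0}\}$; as this sublattice is the intersection of a subspace with $\Z^n$, the basis extends by some $\bW'\in\Z^{n\times r}$ to a unimodular matrix $(\bW'\mid\bW)$ (one may also read $\bW$ off from a Smith normal form of $\bA_{=}$). Put $\bm{C}:=\bA_{<}\bW\in\Z^{(m-r)\times d}$. The affine hull of $\face$ is $\{\bv+\bW\bt:\bt\in\R^d\}$, and since $(\bW'\mid\bW)$ is a lattice basis, the affine map $\bt\mapsto\bv+\bW\bt$ is a bijection from $\{\bt\in\Z^d:\bm{C}\bt\le\bc\}$ onto $\face\cap\Z^n$, sending $\bm{0}$ to $\bv$. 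Because $\face\cap\Z^n\subseteq\poly\cap\Z^n$ and $\bv$ is a vertex of $\conv(\poly\cap\Z^n)$, the origin $\bm{0}$ is a vertex of $\conv\{\bt\in\Z^d:\bm{C}\bt\le\bc\}$. Finally, $\ker\bA=\{\bm{0}\}$ implies $\bA_{<}$ is injective on $\ker\bA_{=}$, so $\bm{C}$ has full column rank $d$.

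The heart of the argument is the estimate $\Delta_{\bm{C}}\le\Delta$, where $\Delta_{\bm{C}}$ is the largest absolute value of a full-rank subdeterminant of $\bm{C}$. Fix $d$ row indices $I$ of $\bm{C}$ (equivalently, of $\bA_{<}$), and let $\bm{M}\in\Z^{n\times n}$ consist of the $r$ rows of $\bA_{=}$ together with the $d$ rows of $\bA$ indexed by $I$; this is a full-size submatrix of $\bA$ since $r+d=n$. As $\bA_{=}\bW=\bm{0}$, the product $\bm{M}(\bW'\mid\bW)$ is block lower triangular with diagonal blocks $\bA_{=}\bW'$ and $\bm{C}_{I,\cdot}$, so $|\det\bm{M}|=|\det(\bA_{=}\bW')|\cdot|\det\bm{C}_{I,\cdot}|$. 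Since $\bA_{=}$ has rank $r$ and $(\bW'\mid\bW)$ is nonsingular, $\bA_{=}\bW'$ is invertible, hence $|\det(\bA_{=}\bW')|\ge 1$; therefore $|\det\bm{C}_{I,\cdot}|\le|\det\bm{M}|\le\Delta$ when the left side is nonzero, and the bound is trivial otherwise. Thus $\bm{C}$ is $\Delta_{\bm{C}}$-modular with $1\le\Delta_{\bm{C}}\le\Delta$, so $f(\Delta_{\bm{C}})\le F(\Delta)$ by the definition of $F$ in \eqref{def_big_F}.

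To finish, assume for contradiction that $d\ge F(\Delta)+1\ge f(\Delta_{\bm{C}})+1$. By the definition of $f$ in \eqref{def_f_delta}, a $\Delta_{\bm{C}}$-modular matrix with more than $f(\Delta_{\bm{C}})$ columns cannot have all its nonzero lattice vectors of infinity norm at least $2$; applied to $\bm{C}$, this yields $\bt^{\star}\in\Z^d\setminus\{\bm{0}\}$ with $\|\bm{C}\bt^{\star}\|_\infty\le 1$. Then $\bm{C}(\pm\bt^{\star})\le\bm{1}\le\bc$, so both $\pm\bt^{\star}$ belong to $\{\bt\in\Z^d:\bm{C}\bt\le\bc\}$, and $\bm{0}=\tfrac12(\bt^{\star}+(-\bt^{\star}))$ contradicts $\bm{0}$ being a vertex of the convex hull of that set. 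Hence $d\le F(\Delta)$. I expect the delicate point to be the subdeterminant bound of the third paragraph — choosing the unimodular completion $(\bW'\mid\bW)$ correctly and carrying out the determinant bookkeeping — while the rest is a routine "perturb in both directions" argument combined with Theorem~\ref{thm_shortest_vector}.
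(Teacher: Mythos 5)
Your proof is correct and takes essentially the same route as the paper: restrict attention to the kernel of the tight constraints at an integer vertex, apply Theorem~\ref{thm_shortest_vector} to the lower-dimensional lattice to obtain a short nonzero vector, and observe that perturbing the vertex in both directions by this vector stays integral and feasible, contradicting vertexhood. The only notable difference is presentational: the paper packages the subdeterminant bound for the restricted system in the parameter $\Delta_I$ and cites external lemmas for it, whereas you derive it explicitly via a lattice basis $\bW$ of $\ker\bA_=\cap\Z^n$, its unimodular completion $(\bW'\mid\bW)$, and a block-triangular determinant factorization, giving a self-contained version of the fact the paper outsources.
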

	If we set $\Delta = 1$ in Theorem~\ref{thm_vertices_integer_hull}, the matrix $\bA$ is unimodular and we recover that vertices of the convex hull of $\poly\cap\Z^n$ coincide with vertices of $\poly$, a fact that follows from a result by Hoffman and Kruskal \cite{hoffkruskal1956tustatement}. When $\Delta = 2$, the vertices of the integer hull lie on faces of dimension zero or one. This also follows from a known result due to Veselov and Chirkov \cite{veselovchirkovbimodular09}. For $\Delta \geq 3$, upper bounds of this type were not known previously. In particular, for $\Delta = 3$, Theorem~\ref{thm_vertices_integer_hull} ensures the existence of integer vectors on faces of dimension at most two, provided that $\poly\cap\Z^n\neq \emptyset$. When $\Delta = 3$, it remains an open problem whether one can find such an integer vector or decide that no such integer vector exists in polynomial time. 
	Theorem~\ref{thm_vertices_integer_hull} can also be applied to obtain novel upper bounds on the number of non-zero entries of optimal solutions of integer optimization problems in standard form. This topic is discussed below.
	
	\section{Sparse Integer Optimal Solutions}
	\label{sec_app_sparse}
	The proof of Theorem~\ref{thm_vertices_integer_hull}  is based on a threshold phenomenon  for the shortest lattice vector problem that is made precise in Theorem \ref{thm_shortest_vector}. This section is devoted to show that this threshold phenomenon also gives new bounds on the sparsity of integer optimal solutions for optimization problems in standard form. For this to make sense, we assume that $\bA\in \Z^{m \times n}$ has full row rank. We call $\bA$ $\Delta$-modular if $\bA^\top$ is $\Delta$-modular according to Definition~\ref{def_delta}. Given such a matrix $\bA$ and  $\bb\in\Z^m$, $\bc\in\Z^n$, an integer optimization problem in standard form is of the form 
	
	\begin{equation}
		\label{ilp_standard_form}
		\max\left\{ \bc^\top\bx \colon \bA\bx = \bb, \, \bx\geq \bm{0}, \,\bx \in \Z^n\right\}.
	\end{equation}
	It is an important question to detect an optimal solution of smallest support, i.e., an optimal solution $\bz^*$ with the smallest number of strictly positive values among $z^*_1,\ldots,z^*_n$.
	
	The problem of bounding the support of solutions for integer optimization problems has been studied quite intensively in the past decade. A first bound in terms of $m$ and determinants of $\bA$ is due to Aliev et al. \cite[Theorem 1]{alideloeisoerweissupportint2018}. Their result gives an upper bound on the support of an optimal integer solution of  $m + \log_2(\sqrt{\det\bA\bA^\top})$; see also \cite[Theorem 3]{alievAverkovLoeraOertel21} for a refinement of the latter statement. 
	A problem with these bounds, for our purposes, is that the expression $\det\bA\bA^\top$ cannot be bounded solely by the parameter $\Delta$, the largest full rank subdeterminant of $\bA$.
	For instance, the matrix $\bT\in\Z^{(n-1)\times n^2}$ obtained from deleting a row of the oriented incidence matrix of a complete graph satisfies $\det\bT\bT^\top = n^{n-2}$ by Kirchhoff's matrix tree theorem; see \cite[Section 7.2]{Cvetković_Rowlinson_Simić_2009}. However, $\bT$ is totally unimodular and thus there always exist integer solutions with at most $n-1$ non-zero entries. 
	Nevertheless, using the Cauchy-Binet formula on $\det\bA\bA^\top$, bounds in terms of $m$ and $\Delta$ on the support of an optimal integer solution are available; see \cite[Theorem 4]{lee2020improving} and \cite[Section 3.1, Remark 5]{gribanov2024delta}. The latter gives an upper bound of $2.81\cdot m + 1.18\cdot\log_2(\Delta)$. 
	Let us also mention that, when the columns of $\bA$ span $\R^m$, one can improve this to $2m + \log_2(\Delta)$ \cite[Theorem 2]{alievAverkovLoeraOertel21}.
	
	Below, we establish the first upper bound  on the support of an optimal integer solution in form of one times $m$ plus a function in $\Delta$. More precisely, our bound  on the support of an optimal integer solution is $m + F(\Delta)$; see \eqref{def_big_F} for the definition of $F(\Delta)$. Given that there exists an optimal solution to the LP relaxation of \eqref{ilp_standard_form} with support at most $m$, the support bounds for the continuous and the integer optimal solution differ essentially by this threshold value $F(\Delta)$ and are independent  of $m$.
	To the best of our knowledge, prior to the result below, upper bounds of the form $m$ plus a function in $\Delta$ are only available when $m=1$ \cite[Theorem 1.2]{alievdeloesparelindio2017} or in an asymptotic setting \cite{oertel2020distributions}. For $\bx\in\R^n$, we define $\supp(\bx) := \lbrace i\in\lbrack n\rbrack: x_i \neq 0\rbrace$.
	\begin{theorem}
		\label{thm_support_bound}
		Let $\bA \in \Z^{m \times n}$ have full row rank, $\bb \in \Z^m$, and $\bc \in \R^n$. 
		If (\ref{ilp_standard_form}) has an optimal solution, then there exists an optimal solution $\bz^*$ such that 
		\begin{align*}
			\left|\supp(\bz^*)\right| \leq m + F(\Delta).
		\end{align*}
	\end{theorem}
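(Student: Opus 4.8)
The plan is to pick an optimal solution $\bz^*$ of \eqref{ilp_standard_form} whose support $S := \supp(\bz^*)$ has minimum cardinality among all optimal solutions, assume for contradiction that $|S| > m + F(\Delta)$, and use the threshold phenomenon to produce a nonzero vector with entries in $\{-1,0,1\}$ inside the integer kernel lattice $L := \{\bx \in \Z^S : \bA_S\bx = \bm{0}\}$, where $\bA_S$ denotes the submatrix of $\bA$ on the columns indexed by $S$. Combining such a vector with $\bz^*$ through a support-exchange step will then contradict the minimality of $|S|$. Write $r := \rank(\bA_S) \le m$, so that $L$ has rank $|S| - r \ge |S| - m > F(\Delta)$.

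The step I expect to require the most care is the following lemma: \emph{a basis matrix $\bB \in \Z^{S\times(|S|-r)}$ of $L$ has every $(|S|-r)\times(|S|-r)$ minor bounded in absolute value by $\Delta$.} I would prove it through a Smith normal form $\bA_S = \bm{U}\bD\bm{V}$ with $\bm{U}\in\Z^{m\times m}$, $\bm{V}\in\Z^{|S|\times|S|}$ unimodular and $\bD$ carrying the nonzero elementary divisors $d_1\mid\cdots\mid d_r$ on its diagonal. Since $\bm{U}$ is invertible, $L = \{\bx : (\bm{V}\bx)_i = 0 \text{ for } i\le r\}$, so $\bB$ may be taken to be the last $|S|-r$ columns of $\bm{V}^{-1}$. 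By Jacobi's identity relating the minors of $\bm{V}^{-1}$ to the complementary minors of $\bm{V}$, for a column set $T$ of size $|S|-r$ the minor $\det\bB_T$ equals, up to sign, the $r\times r$ minor of $\bm{V}$ formed by its first $r$ rows and the columns indexed by $S\setminus T$. Applying the Cauchy--Binet formula to $\bA_S = \bm{U}\bD\bm{V}$ and using that $\bD$ is diagonal and supported on its leading $r\times r$ block yields, for every $r$-subset $R$ of rows, $\det(\bA_{R,\,S\setminus T}) = \pm\,\det(\bm{U}_{R,[r]})\cdot(d_1\cdots d_r)\cdot\det(\bm{V}_{[r],\,S\setminus T})$. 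Choosing $R$ so that $\det(\bm{U}_{R,[r]})\ne 0$ — which is possible because any $r$ columns of the invertible matrix $\bm{U}$ are linearly independent — and using that $\det(\bm{U}_{R,[r]})$ and $d_1\cdots d_r$ are nonzero integers, we obtain $|\det\bB_T| = |\det(\bm{V}_{[r],\,S\setminus T})| \le |\det(\bA_{R,\,S\setminus T})| \le \Delta$; the last inequality holds because $\bA_{R,\,S\setminus T}$ is an $r\times r$ submatrix of $\bA$ and $\bA^\top$ is $\Delta$-modular.

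Given the lemma, let $\Delta'\in\{1,\dots,\Delta\}$ be the largest absolute value of a maximal minor of $\bB$; since $\bB$ has full column rank $|S|-r\ge 1$, this is well defined and $\bB\in\mathcal{M}_{\Delta'}$. By monotonicity of $F$ and \eqref{def_big_F} we have $|S|-r > F(\Delta) \ge F(\Delta') \ge f(\Delta')$, and $f(\Delta')$ is finite by Theorem~\ref{thm_shortest_vector}; hence $|S|-r$ exceeds $f(\Delta')$, and the definition \eqref{def_f_delta} of $f$ forces the existence of $\bz\in\Z^{|S|-r}\setminus\{\bm{0}\}$ with $\|\bB\bz\|_\infty\le 1$. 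As $\bB$ has full column rank, $\bv:=\bB\bz$ is a nonzero vector in $\{-1,0,1\}^S$ with $\bA_S\bv=\bm{0}$; extending it by zeros outside $S$ we may regard $\bv$ as an element of $\{-1,0,1\}^n$ with $\bA\bv=\bm{0}$ and $\supp(\bv)\subseteq S$.

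It remains to carry out the support exchange. Replacing $\bv$ by $-\bv$ if necessary, assume some coordinate of $\bv$ equals $1$. Because $z_i^*\ge 1$ for every $i\in S\supseteq\supp(\bv)$, the points $\bz^*+\bv$ and $\bz^*-\bv$ are nonnegative integer vectors with $\bA(\bz^*\pm\bv)=\bb$, so optimality of $\bz^*$ gives $\bc^\top\bv\le 0$ and $\bc^\top\bv\ge 0$, whence $\bc^\top\bv=0$ and $\bz^*+k\bv$ is optimal for every integer $k$ with $\bz^*+k\bv\ge\bm{0}$. Setting $k_0:=\max\{-z_i^* : v_i=1\}\le -1$, the vector $\bz^*+k_0\bv$ is a nonnegative integer optimal solution, is supported inside $S$, and vanishes at an index where $\bz^*$ is positive; its support is therefore a proper subset of $S$, contradicting the minimality of $|S|$. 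Hence $|\supp(\bz^*)|\le m+F(\Delta)$.
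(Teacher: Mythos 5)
Your proof is a genuinely different route from the paper's, and most of it is correct. The paper chooses $\bz^*$ to be a vertex of the integer hull, passes to the inequality-form polyhedron $\{\by : \bW\by\le\bz^*\}$ with $\bW$ a basis of the orthogonal-complement lattice of the row space of the \emph{full} matrix $\bA$, and then invokes Theorem~\ref{thm_vertices_integer_hull}. You instead choose $\bz^*$ of minimum support, work with the kernel lattice of $\bA_S$ restricted to $S=\supp(\bz^*)$, and perform a direct support-exchange step; this last step (third paragraph) is correct and pleasantly self-contained, avoiding Theorem~\ref{thm_vertices_integer_hull} altogether.

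There is, however, a gap in your key lemma. You conclude $|\det(\bA_{R,\,S\setminus T})|\le\Delta$ ``because $\bA_{R,\,S\setminus T}$ is an $r\times r$ submatrix of $\bA$ and $\bA^\top$ is $\Delta$-modular,'' where $r=\rank(\bA_S)$. But Definition~\ref{def_delta} only bounds the \emph{full rank}, i.e.\ $m\times m$, subdeterminants of $\bA$; minors of order $r<m$ are not controlled in general (e.g.\ $\bigl(\begin{smallmatrix}5&2\\2&1\end{smallmatrix}\bigr)$ is unimodular yet has a $1\times1$ minor equal to $5$). Your argument therefore tacitly assumes $\rank(\bA_S)=m$, which you have not established and which does not follow merely from $|S|>m$. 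The lemma's \emph{statement} is still true, and the fix is not hard: if $r<m$, pick $S'\subseteq[n]\setminus S$ with $|S'|=m-r$ so that $\bA_{S\cup S'}$ has full row rank; then $\ker\bA_{S\cup S'}\cap\Z^{S\cup S'}$ equals $L\times\{\bm{0}\}^{S'}$ (both lattices have rank $|S|-r$ and one contains the other), so a basis of it has zero rows on $S'$ and restricts on $S$ to a basis $\bB$ of $L$. Applying Lemma~\ref{lemma_orthogonal_lattices} (or your Smith-normal-form computation) to the full-row-rank matrix $\bA_{S\cup S'}$ with $I=(S\setminus T)\cup S'$ gives $|\det\bB_T|\le|\det\bA_{[m],\,(S\setminus T)\cup S'}|\le\Delta$, a genuine $m\times m$ minor. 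With this amendment — and it may be cleaner to simply cite Lemma~\ref{lemma_orthogonal_lattices} rather than rederive it — the proof goes through.
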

	We get an explicit upper bound by applying $F(\Delta)\leq \upperboundtext$; cf. \eqref{bound_def_F}, which gives the best-known upper bound on the sparsity of optimal integer solutions when $\Delta$ is constant or small compared to $m$. 
	Similar to Theorem~\ref{thm_vertices_integer_hull}, Theorem~\ref{thm_support_bound} extends known results for $\Delta\in\lbrace 1,2\rbrace$ to arbitrary values of $\Delta$. 
	We also provide a first non-trivial lower bound on the number of non-zero entries of integer solutions in terms of $m$ and $\Delta$. This bound even holds for totally $\Delta$-modular matrices, i.e., matrices in which all $k\times k$ submatrices have a determinant bounded by $\Delta$ in absolute value for all $k\in\lbrack m\rbrack$.
	
	\begin{proposition}
		\label{prop_lower_bound_sparsity}
		Let $\Delta\in\N_{\geq 1}$, $m = (\Delta - 1)^2 + 1$, and $n = (\Delta - 1)^2 + \Delta$. There exists a totally $\Delta$-modular matrix $\bA\in\Z^{m\times n}$ with full row rank and $\bb\in\Z^m$ such that 
		\begin{align*}
			\left|\supp(\bz^*)\right| = m + \Delta - 1
		\end{align*}
		for all optimal integer solutions $\bz^*$ of (\ref{ilp_standard_form}) with respect to all $\bc\in\R^n$. 
	\end{proposition}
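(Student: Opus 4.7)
My plan is to exhibit an explicit totally $\Delta$-modular matrix $\bA \in \Z^{m \times n}$ of full row rank together with a vector $\bb \in \Z^m$ such that the feasible integer set $\{\bx \in \Z^n_{\geq 0} : \bA \bx = \bb\}$ consists of the single point $\bm{1}_n$, the all-ones vector. Since $\bm{1}_n$ has support $n = m + \Delta - 1$ and is automatically the unique optimal integer solution for every $\bc \in \Z^n$, the claim then follows at once.

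The construction is most naturally phrased through the integer kernel $L := \ker(\bA) \cap \Z^n$, a lattice of rank $n - m = \Delta - 1$. I design $L$ so that every nonzero $\bv \in L$ has some coordinate $\leq -2$; this guarantees that $\bm{1}_n + \bv$ is infeasible, so $\bm{1}_n$ is the unique element of the feasible set once we set $\bb := \bA \bm{1}_n$. A natural source of such a lattice is the construction behind the lower bound $f(\Delta) \geq \Delta - 1$ in Theorem~\ref{thm_shortest_vector}: starting from a $\Delta$-modular matrix $\bm{M} \in \Z^{r \times (\Delta - 1)}$ whose nonzero integer column combinations have infinity norm at least $2$, I stack $\bm{M}$, its negation $-\bm{M}$, and carefully chosen auxiliary rows into a basis matrix $\bm{V} \in \Z^{n \times (\Delta - 1)}$ of a rank-$(\Delta - 1)$ sublattice of $\Z^n$ that is saturated in $\Z^n$ and whose maximal minors are all bounded by $\Delta$. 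The matrix $\bA$ is then any integer basis of the $\Z$-dual of $L$, obtainable via a Smith-normal-form computation.

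It remains to verify that $\bA$ is totally $\Delta$-modular and that the feasible integer set is $\{\bm{1}_n\}$. The first follows on the maximal $m \times m$ minors from the duality $|\det(\bA_{:, I})| = |\det(\bm{V}_{[n] \setminus I, :})|$ for saturated kernel bases, with the right-hand side bounded by $\Delta$ by construction; the bound on smaller minors comes from a direct check exploiting the block structure of $\bm{V}$. The second is immediate from the shortness-avoidance property of $L$. The principal technical obstacle is the simultaneous fulfilment of the two opposing requirements on $L$: shortness-avoidance pushes $L$ to be ``coarse'' in $\Z^n$, favouring large entries and minors in any basis, whereas total $\Delta$-modularity caps every maximal minor of $\bm{V}$ at $\Delta$. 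The dimensions $m = (\Delta - 1)^2 + 1$ and $n = (\Delta - 1)^2 + \Delta$ are precisely those that make this balance achievable, and the hardest part of the proof is choosing the auxiliary rows so that $L$ is saturated without inflating any minor of $\bm{V}$ above $\Delta$, and then verifying that all minors of the induced $\bA$---not just those of maximal size---stay within the total $\Delta$-modularity budget.
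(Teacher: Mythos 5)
Your high-level strategy is the same as the paper's: produce an explicit totally $\Delta$-modular $\bA$ and set $\bb := \bA\bm{1}_n$ so that $\bm{1}_n$ is the \emph{unique} non-negative integer solution, whence every objective has a unique optimum of support $n = m+\Delta-1$. Your reduction of uniqueness to a property of the kernel lattice $L = \ker\bA\cap\Z^n$ (every nonzero $\bv\in L$ has a coordinate $\leq -2$, so $\bm{1}_n+\bv$ violates non-negativity) is correct, and stacking $\bm{M}$ with $-\bm{M}$ is the right device to force a coordinate $\leq -2$ rather than merely $\Vert\bv\Vert_\infty\geq 2$. Indeed the paper's matrix has exactly this flavour: its kernel vectors are of the form $(\bx_1,-\bx_1,-\bT\bx_1,\bm{1}^\top\bx_1/\Delta)$ with the divisibility constraint $\bm{1}^\top\bx_1\equiv 0 \bmod \Delta$.

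However, the proposal has a genuine gap: it defers precisely the content that constitutes the proof. You never specify the auxiliary rows of $\bm{V}$, never verify that $L$ is saturated, and never verify the minor bounds --- and you say yourself that this is ``the hardest part.'' Worse, one step of your plan cannot work as stated: total $\Delta$-modularity of $\bA$ requires bounding \emph{all} $k\times k$ minors of $\bA$, but the duality $\left|\det\bA_{\cdot,I}\right| = \left|\det\bm{V}_{\bar{I},\cdot}\right|$ (Lemma~\ref{lemma_orthogonal_lattices}) controls only the maximal $m\times m$ minors. The non-maximal minors of ``the induced $\bA$'' are not determined by $\bm{V}$ at all --- different integer bases of the dual lattice (all obtained from one another by left multiplication with unimodular matrices) have wildly different lower-order minors --- so ``a direct check exploiting the block structure of $\bm{V}$'' has no access to them. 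The paper circumvents this by writing $\bA$ down explicitly (identity blocks, the incidence matrix $\bT$ of the complete digraph on $\Delta-1$ nodes, and one row $(-\bm{1}^\top,\,\bm{0},\,\bm{0},\,\Delta)$), establishing $\Delta$-modularity of the maximal minors by Laplace expansion against a totally unimodular block, and then deducing \emph{total} $\Delta$-modularity by appending a unit column: a $\Delta$-modular matrix containing an identity submatrix has all its lower-order minors realized as full-rank minors. You would need both an explicit construction and some analogue of this last mechanism to close the argument.
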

	For the sake of completeness, let us briefly mention that there is also a line of research investigating upper bounds on the support of optimal solutions of \eqref{ilp_standard_form} in terms of $m$ and $\Vert\bA\Vert_\infty$, the largest entry of $\bA$ in absolute value. In this regime, a first upper bound is due to Eisenbrand and Shmonin \cite[Theorem 1]{eisenbrandshmonincaratheodorybounds06}, which has been improved to $2m\log_2(2\sqrt{m}\Vert\bA\Vert_\infty)$ by Aliev et al. \cite[Theorem 1]{alideloeisoerweissupportint2018}. This bound is known to be tight, up to the constant two in front of $m$ and the logarithm \cite{berndt2021new,berndt2024new}. Hence, in contrast to Theorem~\ref{thm_support_bound}, bounds of the form $m$ plus a function in $\Vert\bA\Vert_\infty$ cannot exist. 
	
	\section{Proof of the Bounds on $f(\Delta)$}
	\label{sec_proof_f}
	Given a $\Delta$-modular matrix $\bA$ with sufficiently large rank, our main goal is to find an integer combination of the columns with coefficients given by $\bz$ such that $\Vert\bA\bz\Vert_\infty = 1$. One hurdle is that although all full rank subdeterminants of $\bA$ are bounded by $\Delta$, the entries of $\bA$ may be arbitrarily large, and, therefore, each column of $\bA$ may be arbitrarily large in the infinity norm. This makes it challenging to construct an integer vector $\bA\bz$ of small norm. To circumvent this issue, we apply a common normalization technique when dealing with $\Delta$-modular matrices: we analyze subdeterminants of $\bA\cdot\bB^{-1} \in \Q^{m\times n}$, where $\bB$ is some invertible full rank submatrix of $\bA$. 
	The following lemma and corollary establish that columns of $\bA\cdot\bB^{-1}$ have a bounded infinity norm. 
	
	To state these results, let us first explain the notation: for a matrix $\bA\in\Z^{m\times n}$ and sets $I\subseteq\lbrack m\rbrack$ and $J\subseteq\lbrack n\rbrack$, we denote by $\bA_I$ or $\bA_{I,\cdot}$ the submatrix consisting of the rows indexed by $I$, by $\bA_{\cdot,J}$ the submatrix consisting of the columns indexed by $J$, and by $\bA_{I,J}$ the submatrix given by the rows indexed by $I$ and columns indexed by $J$. 
	
	\begin{lemma}
		\label{lemma_submatrix_determinants}
		Let $\bA\in\Z^{m\times n}$ have full column rank and $\bB$ be an invertible full rank submatrix of $\bA$. Let $I\subseteq \lbrack m\rbrack$ and $J\subseteq\lbrack n\rbrack$ with $|I|=|J|$. Let $\ba_{i_1},\ldots,\ba_{i_{|I|}}$ be the rows of $\bA$ indexed by $I$ and $\bb_{j_1},\ldots,\bb_{j_{n-|J|}}$ the rows of $\bB$ not indexed by $J$. Then we have
		\begin{align*}
			\left|\det(\bA\cdot\bB^{-1})_{I,J}\right| = \frac{\left|\det(\ba_{i_1},\ldots,\ba_{i_{|I|}},\bb_{j_1},\ldots,\bb_{j_{n-|J|}})\right|}{\left|\det\bB\right|}. 
		\end{align*}
	\end{lemma}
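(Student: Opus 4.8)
The plan is to express the matrix $(\bA\cdot\bB^{-1})_{I,J}$ in a way that reveals its determinant as a ratio of two honest integer determinants. Write $\bA = \bC\bB$ where $\bC := \bA\bB^{-1} \in \R^{m\times n}$; the rows of $\bA$ are then $\bB$-combinations of the rows of $\bC$. The key observation is that $\bC\bB$ agrees with $\bA$ on the rows indexed by $I$, i.e. $\ba_{i} = \bC_{i,\cdot}\bB$ for $i\in I$, while the rows of $\bB$ themselves are $\bB = \bm{I}_n\bB$, so $\bb_{j} = (\bm{e}_j)^\top\bB$. Hence both the numerator matrix $M := (\ba_{i_1},\ldots,\ba_{i_{|I|}},\bb_{j_1},\ldots,\bb_{j_{n-|J|}})^\top$ and a block matrix built from $\bC$ are related by right-multiplication by $\bB$.

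Concretely, first I would define the $n\times n$ matrix $N$ whose first $|I|$ rows are $\bC_{i_1,\cdot},\ldots,\bC_{i_{|I|},\cdot}$ and whose remaining $n-|J|$ rows are $\bm{e}_{j_1}^\top,\ldots,\bm{e}_{j_{n-|J|}}^\top$ (the standard basis vectors indexing the rows of $\bB$ \emph{not} in $J$). Then $N\bB = M^\top$ by the identities above, so $\det M = \det N\cdot\det\bB$, giving $|\det M| = |\det N|\cdot|\det\bB|$. It remains to show $|\det N| = |\det(\bA\bB^{-1})_{I,J}|$. For this I would expand $\det N$ along the rows $\bm{e}_{j_k}^\top$: each such row has a single nonzero entry (a $1$ in column $j_k$), so repeatedly applying Laplace expansion in those rows deletes exactly the columns indexed by $\lbrack n\rbrack\setminus J$ and the rows $|I|+1,\ldots,n$, up to a sign. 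What survives is precisely the minor of $C$ on rows $I$ and columns $J$, that is $\det(\bA\bB^{-1})_{I,J}$ up to sign. Combining, $|\det M| = |\det(\bA\bB^{-1})_{I,J}|\cdot|\det\bB|$, which rearranges to the claim.

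The one genuine point to get right is the bookkeeping: one must check that the columns of $\bB$ not indexed by $J$ are exactly the columns that get eliminated by the $\bm{e}_j^\top$ rows, i.e. that the complement is taken within $\lbrack n\rbrack$ and matches the column index set $J$ used for the minor of $\bA\bB^{-1}$. Since $\bB$ is a full rank (hence $n\times n$, as $\bA$ has $n$ columns) submatrix of $\bA$, its rows are indexed by some $n$-subset of $\lbrack m\rbrack$ but its columns are all of $\lbrack n\rbrack$; the statement indexes the ``rows of $\bB$ not indexed by $J$'' by $\lbrack n\rbrack\setminus J$, which is consistent, and the Laplace expansion then lines up cleanly. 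I expect this indexing verification — rather than any deep argument — to be the main obstacle, together with tracking that all intervening sign factors cancel once absolute values are taken.
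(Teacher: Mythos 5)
Your proof is correct and essentially identical to the paper's: your matrix $N$ is precisely $(\ba_{i_1},\ldots,\ba_{i_{|I|}},\bb_{j_1},\ldots,\bb_{j_{n-|J|}})^\top\bB^{-1}$, the same $n\times n$ extension the paper builds, and both arguments then finish with Laplace expansion along the unit-vector rows plus multiplicativity of the determinant.
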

	\begin{proof}
		Our proof strategy is to extend the submatrix $(\bA\cdot\bB^{-1})_{I,J}$ by appending suitable rows and columns so that it becomes an $n\times n$ submatrix of $\bA\cdot\bB^{-1}$ with the same determinant. This is possible because $\bB$ is a submatrix of $\bA$, which implies that $\bA\cdot \bB^{-1}$ contains the unit matrix as a submatrix. More specifically, we append to $(\bA\cdot\bB^{-1})_{I,J}$ the columns of $\bB^{-1}$ that are not indexed by $J$ and then all rows of $\bA\cdot \bB^{-1}$ that correspond to unit vectors that are supported outside of the columns indexed by $J$. The additional rows come from $\bB$ and are denoted by $\bb_{j_1},\ldots\bb_{j_{n-|J|}}$. 
		Therefore, by applying Laplace expansion along these rows, we have
		\begin{align*}
			\det(\bA\cdot\bB^{-1})_{I,J} &= \pm \det\left((\ba_{i_1},\ldots,\ba_{i_{|I|}},\bb_{j_1},\ldots,\bb_{j_{n-|J|}})^\top\bB^{-1}\right) \\
			&=\pm \frac{\det(\ba_{i_1},\ldots,\ba_{i_{|I|}},\bb_{j_1},\ldots,\bb_{j_{n-|J|}})}{\det\bB}.
		\end{align*}
	\end{proof}
	If we select the invertible submatrix $\bB$ with largest determinant in absolute value in Lemma~\ref{lemma_submatrix_determinants}, we obtain the result below.

	\begin{corollary}
		\label{cor_bounded_subdeterminants}
		Let $\bA\in\Z^{m\times n}$be $\Delta$-modular and $\bB$ a full rank submatrix of $\bA$ with $\left|\det\bB\right| = \Delta$. Then $\bA\cdot\bB^{-1}$ has all its subdeterminants bounded by $1$ in absolute value. In particular, for any column $\br$ of $\bB^{-1}$, we obtain $-\bm{1}\leq\bA\br\leq \bm{1}$.
	\end{corollary}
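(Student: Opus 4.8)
The plan is to read the corollary off directly from Lemma~\ref{lemma_submatrix_determinants} after committing to the specific choice of $\bB$ demanded by the hypothesis. So fix a full rank submatrix $\bB$ of $\bA$ with $\left|\det\bB\right| = \Delta$. Since $\bA$ has full column rank $n$, this $\bB$ is an $n\times n$ submatrix, i.e.\ $\bB = \bA_{K,\cdot}$ for some $K\subseteq\lbrack m\rbrack$ with $|K| = n$; in particular every row of $\bB$ is literally a row of $\bA$.

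Now take arbitrary $I\subseteq\lbrack m\rbrack$ and $J\subseteq\lbrack n\rbrack$ with $|I| = |J|$, and bound $\left|\det(\bA\cdot\bB^{-1})_{I,J}\right|$. Lemma~\ref{lemma_submatrix_determinants} rewrites this as $\left|\det(\ba_{i_1},\ldots,\ba_{i_{|I|}},\bb_{j_1},\ldots,\bb_{j_{n-|J|}})\right|\big/\left|\det\bB\right|$, where the $\ba_{i_k}$ are rows of $\bA$ and the $\bb_{j_\ell}$ are rows of $\bB$, hence again rows of $\bA$. Because $|I| + (n-|J|) = n$, the matrix in the numerator is $n\times n$ with every row a row of $\bA$. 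If two of these rows coincide (which can occur when $I$ and the row-indices contributed by $\bB$ overlap as subsets of $\lbrack m\rbrack$), the numerator is $0$. Otherwise the numerator is the absolute value of the determinant of a genuine $n\times n$ submatrix of $\bA$, which by $\Delta$-modularity (Definition~\ref{def_delta}) is at most $\Delta$. Either way the numerator is at most $\Delta = \left|\det\bB\right|$, so the quotient is at most $1$, which is the first assertion.

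The ``in particular'' statement is just the case $|I| = |J| = 1$: the $(i,j)$ entry of $\bA\cdot\bB^{-1}$ equals $\det(\bA\cdot\bB^{-1})_{\{i\},\{j\}}$, hence lies in $\lbrack -1,1\rbrack$; reading this column by column shows that every column $\br$ of $\bB^{-1}$ satisfies $-1\leq\bA\br\leq 1$. I do not expect a real obstacle here — the argument is essentially a substitution into the lemma. The only point needing a word of care is the degenerate configuration in which the rows assembled in the numerator fail to be pairwise distinct, so that ``submatrix of $\bA$'' must be replaced by ``determinant equal to $0$''; but this case only strengthens the bound.
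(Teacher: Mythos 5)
Your proof is correct and matches the paper's (unstated but implicit) approach exactly: the corollary is Lemma~\ref{lemma_submatrix_determinants} specialized to the choice $\left|\det\bB\right| = \Delta$, with the numerator bounded by $\Delta$ via $\Delta$-modularity, whence the ratio is at most $1$. Your explicit handling of the degenerate case where the assembled rows repeat (so the numerator vanishes) is a correct point of care that the paper leaves implicit.
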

	\begin{proof}
			Let $I\subseteq\lbrack m\rbrack$ and $J\subseteq\lbrack n\rbrack$ such that $|I|=|J|$. Then, by Lemma~\ref{lemma_submatrix_determinants} and $\left|\det\bB\right| = \Delta$, we get
			\begin{align*}
					\left|\det(\bA\cdot\bB^{-1})_{I,J}\right| = \frac{\left|\det(\ba_{i_1},\ldots,\ba_{i_{|I|}},\bb_{j_1},\ldots,\bb_{j_{n-|J|}})\right|}{\Delta} \leq 1,
				\end{align*}
			where the last inequality follows since the determinant in the nominator is a full rank subdeterminant of $\bA$ and thus at most $\Delta$ in absolute value.
			
		\end{proof}
	We only need Corollary~\ref{cor_bounded_subdeterminants} to prove Theorem~\ref{thm_shortest_vector}. Lemma~\ref{lemma_submatrix_determinants} will be used in Section~\ref{sec_algorithm}. 
	Observe that $\bA\cdot\bB^{-1}\in\Q^{m\times n}$ may not have integer entries anymore. This does not cause difficulties since an underlying lattice structure still exists: each column of $\bB^{-1}$ corresponds to an element in a finite abelian group. For the proof of Theorem~\ref{thm_shortest_vector}, it is important to have enough columns such that at least $\Delta$ many of them correspond to the same element in the finite abelian group. This will determine the upper bound on the threshold dimension $f(\Delta)$, which is why we present the underlying group argument here. 
	
	Let $(G,+)$ be a finite abelian group and $I(G):=\lbrace g\in G : g+g = 0\rbrace$ be the set of so-called involutions of $G$. Observe that $0\in I(G)$. 
	Based on $I(G)$, we consider two subsets of $G\backslash \lbrace 0\rbrace$, one that is contained in $G\backslash I(G)$ and the other one in $I(G)\backslash \lbrace 0\rbrace$. For the set contained in $G\backslash I(G)$, we allow for duplicates whereas for the other set we assume that elements are pairwise disjoint. The reason for this distinction will become evident in the proof of Theorem~\ref{thm_shortest_vector}, where we apply the result below when $|G| = \Delta$. Recall that $\bm{1}_{2\Z}(\Delta)$ equals $1$ if $\Delta$ is even and $0$ otherwise.
	
	\begin{lemma}
		\label{lemma_pigeonhole}
		Let $(G,+)$ be a finite abelian group, $\lbrace g_1,\ldots,g_k\rbrace \subseteq G\backslash I(G)$ be a multiset, and $\lbrace h_1,\ldots,h_l\rbrace \subseteq I(G)\backslash \lbrace 0\rbrace$ with $h_i\neq h_j$ for different $i,j\in\lbrack l\rbrack$. Suppose that
		\begin{align}
			\label{ineq_lower_bound_k_l}
			k + l \geq \left\lfloor \frac{|G|-1}{2}\right\rfloor \cdot \left(|G| - 1\right) + \bm{1}_{2\Z}(|G|) + 1.
		\end{align}
		Then there exist $I\subseteq \lbrack k\rbrack$ with $|I| = |G|$ and $\sigma_i\in \lbrace -1,1\rbrace$ for each $i\in I$ such that 
		\begin{align*}
			\sigma_i g_i = \sigma_j g_j
		\end{align*}
		for all $i,j\in I$.
	\end{lemma}

	\begin{proof}
			We claim that the lower bound \eqref{ineq_lower_bound_k_l} on $k + l$ implies 
			\begin{align}
					\label{ineq_lemma_lower_bound_k}
					k \geq \frac{|G|-|I(G)|}{2} \cdot \left(|G| - 1\right) + 1.
				\end{align}
			Suppose that \eqref{ineq_lemma_lower_bound_k} holds. 
			The right hand side is an integer because, for each element $g\in G\backslash I(G)$, we have $g\neq -g$ and $-g\in G\backslash I(G)$. So the elements in $G\backslash I(G)$ come in pairs, which implies that $|G|-|I(G)|$ is even. Consider $g_1,\ldots, g_k\in G\backslash I(G)$. There are $2 k$ possible elements $\pm g_1,\ldots,\pm g_k \in G\backslash I(G)$. By the pigeonhole principle, the lower bound on $k$ guarantees that we can find a set of at least $|G|$ elements among $\pm g_1,\ldots,\pm g_k \in G\backslash I(G)$ that are equal. Moreover, since $g_i \notin I(G)$, we have $g_i\neq - g_i$. So at most one of $g_i$ and $-g_i$ belongs to this set. This shows the existence of $I$ and defines corresponding signs. 
			It remains to verify that \eqref{ineq_lower_bound_k_l} implies \eqref{ineq_lemma_lower_bound_k}. We distinguish between $|G|$ odd and even. 
			
			Let $|G|$ be odd. 
			By definition of $I(G)$, each element $g\in I(G)\backslash\lbrace 0\rbrace$ generates the subgroup $\lbrace 0,g\rbrace\subseteq G$ of cardinality two. Since $|G|$ is odd, there exists no subgroup of cardinality two. This implies $I(G) = \lbrace 0\rbrace$. So $l = 0$ and the right hand sides in \eqref{ineq_lower_bound_k_l} and \eqref{ineq_lemma_lower_bound_k} agree as $\bm{1}_{2\Z}(|G|) = 0$.
			
			Let $|G|$ be even. This implies that the right hand side of \eqref{ineq_lower_bound_k_l} satisfies
			\begin{align*}
					\left\lfloor \frac{|G|-1}{2}\right\rfloor \cdot \left(|G| - 1\right) + \bm{1}_{2\Z}(|G|) + 1 = \frac{|G|-2}{2}\cdot \left(|G| - 1\right) + 2
				\end{align*}
			as $\bm{1}_{2\Z}(|G|) = 1$. Since $|G|$ is even, it has at least one subgroup of cardinality two. So we have $|I(G)|\geq 2$. This gives  
			\begin{align*}
					\frac{|G|-2}{2}\cdot \left(|G| - 1\right) + 2 \geq \frac{|G|-|I(G)|}{2}\cdot \left(|G| - 1\right) + |I(G)|
				\end{align*}
			because $(|G|-x)/2\cdot (|G| - 1) + x$ is a non-increasing function in $x$. We established  
			\begin{align*}
					k + l \geq \frac{|G|-|I(G)|}{2}\cdot \left(|G| - 1\right) + |I(G)|.
				\end{align*}
			Subtracting $l$ and using that $l\leq |I(G)|-1$, which holds since $h_1,\ldots,h_l$ are pairwise different and do not equal $0\in I(G)$, shows that \eqref{ineq_lemma_lower_bound_k} holds.
		\end{proof}
	
	\begin{proof}[Proof of Theorem~\ref{thm_shortest_vector}] 
		We show that $f(\Delta)\leq \upperboundtext$. This implies that $f(\Delta)$ is finite for all values of $\Delta$. After that we prove the lower bound on $f(\Delta)$.
		
		Let $\bA\in\Z^{m\times n}$ be $\Delta$-modular and suppose that $n\geq \upperboundtext + 1$. Our goal is to construct a non-zero integer vector $\bz^*$ such that $\Vert\bA\bz^*\Vert_{\infty} = 1$. We divide the proof into three steps. In each step, we present potential candidate vectors for $\bz^*$ and show that at least one of them can be chosen as $\bz^*$. To do so, we need the following ingredients: 
		Let $\bB$ be an $n \times n$ submatrix of $\bA$ that has determinant $\Delta$. Set $\bB^{-1} = (\br_1,\ldots,\br_n)$ and $\Lambda = \bB^{-1}\Z^n$. The lattice $\Lambda$ satisfies $\Z^n\subseteq \Lambda$ and $\det\Lambda = \left|\det\bB^{-1}\right| = 1/\Delta$. This implies that $\Lambda / \Z^n$ is a finite abelian group of cardinality $(\det\Lambda)^{-1} = \Delta$. Below, we consider the set $R = \lbrace \br_1 + \Z^n,\ldots,\br_n + \Z^n\rbrace \subseteq \Lambda / \Z^n$ given by $\bB^{-1}$.
		
		\textbf{Step~1 ($\br_i\in\Z^n$):} 
		Suppose there exists $\br_i + \Z^n \in R$ such that $\br_i + \Z^n = \Z^n$, that is, we have $\br_i\in\Z^n$ for some $i\in\lbrack n\rbrack$. In this case, we get $-\bm{1}\leq \bA\br_i\leq \bm{1}$ by Corollary~\ref{cor_bounded_subdeterminants} and the theorem follows. 
		So, for the remainder of the proof, we assume that $\br_i\notin \Z^n$ for all $i\in\lbrack n\rbrack$, or, equivalently, no element in $R$ equals $\Z^n$. 
		
		\textbf{Step~2 ($\br_i + \br_j,\br_i - \br_j \in \Z^n$):} 
		Suppose there exist elements in $R$ that are not pairwise different and contained in $I(\Lambda / \Z^n)\backslash \lbrace \Z^n\rbrace$, e.g., there exist $i,j\in \lbrack n\rbrack$ with $i\neq j$ such that $\br_i + \Z^n, \br_j + \Z^n\in R\cap I(\Lambda / \Z^n)\backslash \lbrace \Z^n\rbrace$ and $\br_i + \Z^n = \br_j + \Z^n$. Both properties imply that $\br_i + \br_j \in \Z^n$ and $\br_i - \br_j \in \Z^n$. Note that $\br_i + \br_j, \br_i - \br_j$ are non-zero because $\bB^{-1}$ is invertible and since $\br_i$ and $\br_j$ are columns of $\bB^{-1}$. We show that at least one of the two vectors satisfies $-\bm{1}\leq \bA\bx\leq \bm{1}$. Assume that $\br_i + \br_j$ does not satisfy $-\bm{1}\leq \bA\bx\leq \bm{1}$. So there
		exists a row $\ba$ of $\bA$ such that $|\ba^\top(\br_i + \br_j)| > 1$. Let $\ba^\top(\br_i + \br_j) > 1$ without loss of generality. Since $\ba$ and $\br_i + \br_j$ have integer entries, the scalar product $\ba^\top(\br_i + \br_j)$ is an integer and we have $\ba^\top(\br_i + \br_j)\geq 2$. As $-\bm{1}\leq\bA\br_i\leq \bm{1}$ and $-\bm{1}\leq\bA\br_j\leq \bm{1}$ by Corollary~\ref{cor_bounded_subdeterminants}, we have $\ba^\top\br_i = 1$ and $\ba^\top\br_j = 1$. Next, consider $\br_i - \br_j$. Suppose that there exists a row $\tilde{\ba}$ of $\bA$ such that $|\tilde{\ba}^\top(\br_i - \br_j)| > 1$. Observe that $\tilde{\ba}\neq \ba$ since $\tilde{\ba}^\top(\br_i-\br_j) = 0$. Following the same arguments as before, we can assume without loss of generality that $\tilde{\ba}^\top(\br_i - \br_j) \geq 2$, $\tilde{\ba}^\top\br_i = 1$, and $\tilde{\ba}^\top\br_j = -1$. However, then the matrix $\bA\cdot\bB^{-1}$ contains the submatrix 
		\begin{align}
			\label{proof_main_diff_large_determinant_involution}
			\begin{pmatrix}
				\ba^\top \br_i & \ba^\top\br_k \\ 
				\tilde{\ba}^\top\br_i & \tilde{\ba}^\top\br_k 
			\end{pmatrix} = \begin{pmatrix}
				1 & 1 \\
				1 & -1
			\end{pmatrix}.
		\end{align}
		This submatrix has determinant $2$ in absolute value, which contradicts that $\bA\cdot\bB^{-1}$ has all subdeterminants bounded by $1$ as stated in Corollary~\ref{cor_bounded_subdeterminants}. So we have $-\bm{1}\leq \bA(\br_i-\br_j)\leq \bm{1}$ and the theorem follows. 
		Therefore, we assume from now on that elements from $R$ are pairwise different if they are contained in $I(\Lambda / \Z^n)\backslash \lbrace \Z^n\rbrace$. 
		
		\textbf{Step~3 ($\br_i - \br_j \in \Z^n \; \forall i,j\in I \; \& \; \sum_{i\in I}\br_i\in\Z^n, \; |I|\geq\Delta$):} 
		We can exclude the cases considered in the previous two steps. Hence, the set $R$ meets the assumptions of Lemma~\ref{lemma_pigeonhole} with $n = k + l$ and $G = \Lambda / \Z^n$. 
		We apply Lemma~\ref{lemma_pigeonhole} and obtain $\br_1 + \Z^n = \ldots = \br_{\Delta} + \Z^n$, after reordering and resigning columns if necessary. This implies
		\begin{enumerate}
			\item\label{proof_main_enumerate_test_vectors_difference} $\br_i - \br_j\in\Z^n\backslash\lbrace \bm{0}\rbrace$ for all $i,j\in\lbrack \Delta\rbrack$ with $i\neq j$ and
			\item\label{proof_main_enumerate_test_vectors_sum} $\br_1+\ldots+\br_{\Delta}\in\Z^n\backslash\lbrace \bm{0}\rbrace$. 
		\end{enumerate}
		The vectors above are non-zero because $\bB^{-1}$ is invertible. We claim that one of them satisfies $-\bm{1}\leq \bA\bx\leq \bm{1}$. Suppose $\br_i - \br_j$ does not satisfy $-\bm{1}\leq \bA\bx\leq \bm{1}$ for all $i,j\in \lbrack\Delta\rbrack$ with $i\neq j$. 
		As before, we can assume without loss of generality that $\ba^\top(\br_i-\br_j) \geq 2$, 
		$\ba^\top\br_i = 1$, and $\ba^\top\br_j = -1$. We claim that $\ba^\top\br_k = 0$ for all $k\in\lbrack \Delta\rbrack\backslash\lbrace i,j\rbrace$. 
		
		For the purpose of deriving a contradiction, suppose that there exists an index $k$ such that $|\ba^\top\br_k| > 0$. Again, without loss of generality we can assume that $\ba^\top\br_k > 0$. Next,  consider the integer vector $\br_i-\br_k$. Since $\ba^\top(\br_i-\br_k) = 1 - \ba^\top\br_k$ is an integer and $\ba^\top\br_k\leq 1$ by Corollary~\ref{cor_bounded_subdeterminants}, we conclude that $\ba^\top(\br_i-\br_k) = 0$. This holds if and only if $\ba^\top\br_k = 1$. 
		Applying the arguments from above to $\br_i-\br_k$, we obtain another row $\tilde{\ba}^\top$ of $\bA$ such that  $|\tilde{\ba}^\top(\br_i-\br_k)| =  2$ and, without loss of generality, $\tilde{\ba}^\top\br_i = 1$ and $\tilde{\ba}^\top\br_k = -1$. However, then $\bA\cdot\bB^{-1}$ contains the submatrix 
		\begin{align}
			\label{proof_main_diff_large_determinant}
			\begin{pmatrix}
				\ba^\top \br_i & \ba^\top\br_k \\ 
				\tilde{\ba}^\top\br_i & \tilde{\ba}^\top\br_k 
			\end{pmatrix} = \begin{pmatrix}
				1 & 1 \\
				1 & -1
			\end{pmatrix},
		\end{align}
		which has determinant $2$ in absolute value, a contradiction to Corollary~\ref{cor_bounded_subdeterminants}. 
		This shows the claim that $\ba^\top\br_k = 0$ for all $k\in\lbrack \Delta\rbrack\backslash\lbrace i,j\rbrace$. 
		In summary, we get, for each pair $i,j\in\lbrack \Delta\rbrack$ with $i\neq j$, a unique row $\ba^\top$ of $\bA$ such that
		\begin{equation}
			\label{proof_main_diff_cut_off}
			\begin{array}{lll}
				\ba^\top\br_k = 0 & \text{ for all } & k\in\lbrack \Delta\rbrack\backslash\lbrace i,j\rbrace\\	
				|\ba^\top(\br_k - \br_l)|\geq 2  & \iff &  k = i \text{ and } l = j \text{ for all } k,l\in\lbrack \Delta\rbrack.
			\end{array}
		\end{equation} 
		This fact is used as follows: 
		Consider the integer vectors $\br_{i}-\br_{i+1}$ for $i\in\lbrack \Delta - 1\rbrack$. Suppose that none of these vectors satisfy $-\bm{1}\leq \bA\bx\leq \bm{1}$. It follows that, for all $i\in \lbrack \Delta - 1\rbrack$, there exists a unique row $\ba_i^\top$ of $\bA$ with the properties \eqref{proof_main_diff_cut_off}. This leads, up to multiplying rows with $-1$, to the submatrix 
		\begin{align*}
			\begin{pmatrix}
				\ba_1^\top\br_1 & \dots & \ba_1^\top\br_{\Delta} \\
				\vdots & \ddots & \vdots \\ 
				\ba_{\Delta - 1}^\top \br_1 & \dots & \ba_{\Delta - 1}^\top\br_{\Delta}
			\end{pmatrix} = \begin{pmatrix}
				1 & -1 & & & \\
				& 1 & -1 & & \\
				& & \ddots & \ddots& \\
				& & & 1 & -1
			\end{pmatrix}
		\end{align*} 
		of $\bA\cdot\bB^{-1}$. 
		As a final step, we take the integer vector $\br_1+\ldots+\br_{\Delta}$ into consideration. Select an arbitrary row $\ba^\top$ of $\bA$. We obtain a $\Delta\times \Delta$ submatrix of $\bA\cdot\bB^{-1}$ of the form
		\begin{align}
			\label{proof_main_final_determinant}
			\begin{pmatrix}
				\ba^\top\br_1 & \dots & \ba^\top \br_{\Delta}\\
				\ba_1^\top\br_1 & \dots & \ba_1^\top\br_{\Delta} \\
				\vdots & \ddots & \vdots \\ 
				\ba_{\Delta - 1}^\top \br_1 & \dots & \ba_{\Delta - 1}^\top\br_{\Delta}
			\end{pmatrix} = \underbrace{\begin{pmatrix}
					\ba^\top\br_1 & \dots &\dots  & \dots&\ba^\top\br_{\Delta} \\
					1 & -1 & & & \\
					& 1 & -1 & & \\
					& & \ddots & \ddots& \\
					& & & 1 & -1
			\end{pmatrix}}_{=:\bD}.
		\end{align} 
		From Corollary~\ref{cor_bounded_subdeterminants} it follows that all subdeterminants of $\bA\cdot\bB^{-1}$ are at most $1$ in absolute value. This applies in particular to the submatrix $\bD$. We use this fact and Laplace expansion along the row given by $\ba^\top$ to analyze the determinant of $\bD$. For the Laplace expansion, note that, up to a sign, the uniquely scaled vector contained in the kernel of the last $n-1$ rows of $\bD$ is $\bm{1}$. So we obtain the relation 
		\begin{align*}
			1\geq \left|\det\bD\right| = \left|\ba^\top\br_1+\ldots + \ba^\top\br_{\Delta}\right|.
		\end{align*}
		This holds for all rows of $\bA$, i.e., $-\bm{1}\leq \bA(\br_1 + \ldots + \br_n)\leq \bm{1}$, and hence verifies the upper bound on $f(\Delta)$.		

		To finish the proof, we construct a lower bound of the form $\Delta - 1 \leq f(\Delta)$. 
		Let $D := (V,A)$ be the directed graph given by nodes $V := \lbrace 1,\ldots, \Delta\rbrace$ and arcs $A := \lbrace (i,j) :i < j\text{ for }i,j\in\lbrack \Delta\rbrack\rbrace$.
		Let $\bT'$ be the arc-node incidence matrix of $D$. Consider the matrix $\bT$ that arises from $\bT'$ by deleting the last column. 
		Define the invertible matrix
		\begin{align*}
			\bB := \begin{pmatrix}
				1 & & & \\
				& \ddots & & \\
				& & 1 & \\ 
				\Delta - 1 & \cdots & \Delta - 1 & \Delta
			\end{pmatrix} \in\Z^{(\Delta - 1)\times (\Delta - 1)}
		\end{align*}
		and let $\bA := \bT\cdot\bB$. Since $\bT\in\Z^{\binom{\Delta}{2}\times (\Delta - 1)}$ is totally unimodular and has full column rank, cf. \cite[Chapter 19]{schrijvertheorylinint86}, the matrix $\bA$ has full column rank as well and is $\Delta$-modular. We claim that $\min_{\bz\in\Z^n\backslash\lbrace \bm{0}\rbrace}\Vert\bA\bz\Vert_\infty \geq 2$, which proves the result. 
		
		Let $\blambda = (\lambda_1,\ldots,\lambda_{\Delta - 1})^\top \in \Z^{\Delta - 1}$ and $\bz = \bB^{-1}\blambda \in \Z^{\Delta - 1}$. We have $\bA\bz = \bT\blambda$. Observe that $\bT$ contains the unit matrix, which is given by the rows that correspond to the arcs incident to the vertex $\Delta$, whose corresponding column we deleted. Therefore we obtain $\Vert\bA\bz\Vert_\infty\geq \Vert\blambda\Vert_\infty$. Hence, it suffices to study the case when $\Vert\blambda\Vert_\infty = 1$. One can calculate that 
		\begin{align*}
			\bB^{-1} = \begin{pmatrix}
				1 & & & \\
				& \ddots & & \\
				& & 1 & \\ 
				-\frac{\Delta - 1}{\Delta} & \cdots & -\frac{\Delta - 1}{\Delta} & \frac{1}{\Delta}
			\end{pmatrix}.
		\end{align*}
		This gives the following characterization for integrality 
		\begin{align*}
			\bB^{-1}\blambda \in \Z^{\Delta - 1} \Leftrightarrow \lambda_1 + \ldots + \lambda_{\Delta - 1} \equiv 0 \mod \Delta.
		\end{align*}
		So we have $\lambda_1 + \ldots + \lambda_{\Delta - 1} = 0$ since $\left|\lambda_1 + \ldots + \lambda_{\Delta - 1}\right| \leq \Delta - 1$ by $\Vert \blambda\Vert_\infty = 1$. Therefore, there exist two indices $i,j\in\lbrack \Delta - 1\rbrack$ such that $\lambda_i = 1$ and $\lambda_j = -1$. Then the row $\bt$ of $\bT$ that corresponds to the arc $(i,j)$ gives $\bt^\top \blambda = \pm2$, which implies $\Vert\bA\bz\Vert_\infty =\Vert\bT\blambda\Vert_\infty\geq 2$.
	\end{proof}
		
	\section{Proofs of the Polyhedral Results}
	\label{sec_standard_form_proofs}
	To prove Theorem~\ref{thm_vertices_integer_hull}, we need to apply Theorem~\ref{thm_shortest_vector} to lower-dimen\-sional subspaces. More precisely, we need to investigate vectors satisfying $-\bm{1}\leq\bA\bx\leq \bm{1}$ and $\bA_I\bx = \bm{0}$ for some $I\subseteq\lbrack m\rbrack$ such that the rows of $\bA_I$ are linearly independent. It turns out that Theorem~\ref{thm_shortest_vector} already applies in this setting. We discuss this here first, before proving Theorem~\ref{thm_vertices_integer_hull}. 
	
	Let $\bA\in\Z^{m\times n}, \Delta \in \N_{\geq 1}$, and $I\subseteq \lbrack m\rbrack$ be a set of indices of linearly independent rows of $\bA$. We define
	\begin{align*}
		\Delta_I := \max\left\{ \left|\det\left(\begin{array}{c}
				\bA_{I}\\
				\bA_{J}
			\end{array}\right)\right|:J\subseteq\left[m\right],\;\left|J\right|=n-\left|I\right|\right\}
	\end{align*}
	and the set of all full rank matrices that contain a subset of linearly independent rows indexed by $I$ such that $\Delta_I = \Delta$:
	\begin{align*}
			\overline{\mathcal{M}_{\Delta}} := \left\lbrace (\bA, I) : \bA\in\Z^{m\times n} \;\text{full column rank}, I \subseteq \lbrack m\rbrack, \rank\bA_I = |I|, \Delta_I = \Delta\right\rbrace.
		\end{align*}
	Now we extend the definition of $f(\Delta)$ as follows
	\begin{align*}
			\overline{f}(\Delta) := \sup_{(\bA,I)\in\overline{\mathcal{M}_{\Delta}}}\left\lbrace \dim(\ker\bA_I)\in\N :  \Vert \bA\bz \Vert_\infty \geq 2 \; \forall \bz\in\ker\bA_I\cap\Z^n\backslash\lbrace \bm{0}\rbrace \right\rbrace.
		\end{align*}
	One can essentially apply the same arguments that prove Theorem~\ref{thm_shortest_vector} to show that $\overline{f}(\Delta)$ is finite for all $\Delta\in\N_{\geq 1}$. To do so, one has to  work with the set $R\subseteq \Lambda/\Z^n$ defined by the columns of $\bB^{-1}$, where we only consider those columns $\br$ that satisfy $\br\in\ker\bA_I$. We refrain from proving this explicit upper bound on $\overline{f}(\Delta)$ since we aim for a more general statement, namely $\overline{f}(\Delta) = f(\Delta)$. This more general statement implies that, if one wants to understand the threshold dimension of certain subspaces, it suffices to study without loss of generality the full-dimensional case. 
	\begin{lemma}
			\label{lemma_subspaces_to_fullrank}
			Let $\Delta \in \N_{\geq 1}$. Then $\overline{f}(\Delta) = f(\Delta)$ holds.
		\end{lemma}
	\begin{proof}
		By restricting to elements in $\overline{\mathcal{M}_{\Delta}}$ with $I = \emptyset$, we get $\overline{f}(\Delta) \geq f(\Delta)$. We show that the reverse inequality holds. Let $(\bA,I)\in \overline{M_{\Delta}}$ be such that $\dim(\ker\bA_I)\geq f(\Delta) + 1$. Our goal is to show the existence of $\bz^* \in \ker\bA_I\cap\Z^n\backslash\lbrace \bm{0}\rbrace$ such that $\Vert\bA\bz^*\Vert_\infty = 1$. To achieve this, we change the representation of $\bA$. 
		Let $\bU\in\Z^{n\times n}$ be a unimodular matrix such that $\bA_I\cdot\bU$ is in Hermite normal form; see \cite[Chapter~4]{schrijvertheorylinint86} for more on Hermite normal forms. Let without loss of generality $\bA_I$ be the first $|I|$ rows of $\bA$. Then the transformation by $\bU$ implies that 
		\begin{align*}
				\bA\cdot\bU = \begin{pmatrix}
						\bH & \bm{0} \\ 
						\star & \bA'
					\end{pmatrix},
			\end{align*}
		where $\bA_I\cdot\bU = (\bH, \bm{0})$ for an invertible matrix $\bH\in\Z^{|I|\times |I|}$. The advantage of this new representation is that we have $\ker(\bA_I\cdot\bU) = \ker (\bH,\bm{0}) = \lbrace 0\rbrace^{|I|}\times \R^{n - |I|}$. In other words, elements in $\ker(\bA_I\cdot \bU)\cap\Z^n\backslash\lbrace \bm{0}\rbrace$ have the first $|I|$ coordinates equal to zero. Therefore, we consider the last $n - |I|$ coordinates and the matrix $\bA'\in\Z^{(m-|I|)\times (n - |I|)}$. The latter is of full column rank since $\bA$ has full column rank. It is also $\Delta$-modular since $\Delta_I = \Delta$ as the multiplication by $\bU$ does not change full rank determinants. So we established that $\bA'\in\mathcal{M}_{\Delta}$. Moreover, as $n - |I| = \dim(\ker\bA_I)\geq f(\Delta) + 1$, the definition of $f(\Delta)$ ensures that there exists $\bz'\in\Z^{n-|I|}\backslash\lbrace \bm{0}\rbrace$ with $\Vert\bA'\bz'\Vert_\infty = 1$. Let $\bz \in \ker(\bA_I\cdot\bU)\cap\Z^n\backslash\lbrace \bm{0}\rbrace$ be the vector $\bz'$ with $|I|$ zeros appended. We have $\Vert \bA\cdot\bU \bz\Vert_\infty = 1$ by construction. Now simply select $\bz^*\in\Z^n\backslash\lbrace \bm{0}\rbrace$ to be $\bz^* = \bU \bz$. Then $\Vert\bA\bz^*\Vert_\infty = 1$ and $\bA_I\bz^* = \bA_I\cdot\bU\bz = \bm{0}$.
	\end{proof}
	The proof technique of Lemma~\ref{lemma_subspaces_to_fullrank} is not new. It appears, for example, in \cite[Lemma~1]{celayakuhlpaatweis2023proxandflatness} or \cite[Lemma~6]{alievhenkhogankuhlmannoertel2024newcaratheodory}. 
	The key property of $\Delta_I$ for proving Theorem~\ref{thm_vertices_integer_hull} is that $\Delta_I\leq \Delta$, by definition, and thus $f(\Delta_I)\leq F(\Delta)$ by the definition of $F(\Delta)$; cf. \eqref{def_big_F}.
	
	\begin{proof}[Proof of Theorem~\ref{thm_vertices_integer_hull}]
		We will show the contraposition of the claim, i.e., every integer vector in $\poly\cap\Z^n$ that is not contained in a face of $\poly$ with dimension at most $F(\Delta)$ is not a vertex of the convex hull of $\poly\cap\Z^n$. 
		Let $\by \in \poly\cap \Z^n$ be a vector that does not lie on a face of $\poly$ of dimension at most $F(\Delta)$. Let $K\subseteq\lbrack m\rbrack$ be the index set for the tight inequalities of $\bA\by\leq \bb$, that is, $\bA_K\by = \bb_K$. Select a set $I\subseteq K$ such that the rows of $\bA_I$ are linearly independent and $\rank\bA_I = \rank\bA_K$. Since $\by$ does not lie on a face of $\poly$ of dimension at most $F(\Delta)$, we have $\rank\bA_I \leq n - (F(\Delta) + 1)$. Consider the face $\face := \poly \cap \lbrace \bx\in\R^n : \bA_I\bx = \bb_I\rbrace$ of $\poly$. As $\rank\bA_I \leq n - (F(\Delta) + 1)$, we obtain $\dim (\face) \geq F(\Delta) + 1\geq f(\Delta_I) + 1 = \overline{f}(\Delta_I) + 1$ by the definition of $F(\Delta)$ and Lemma~\ref{lemma_subspaces_to_fullrank}. 
		So we can use the definition of $\overline{f}(\Delta_I)$ and that $\overline{f}(\Delta_I)$ is finite due to Theorem~\ref{thm_shortest_vector} and Lemma~\ref{lemma_subspaces_to_fullrank}: there exists $\bz^*\in\Z^n\backslash\lbrace \bm{0}\rbrace$ such that $-\bm{1}\leq \bA\bz^*\leq \bm{1}$ and $\bA_I\bz^* = \bm{0}$. 
		This implies that $\by \pm \bz^* \in \face \cap \Z^n\subseteq \poly\cap\Z^n$. We conclude that $1/2\cdot(\by + \bz^*) + 1/2\cdot(\by - \bz^*) = \by$. Hence, $\by$ is a convex combination of integer vectors in $\poly$. This  contradicts that $\by$ is  a vertex of the integer hull.
	\end{proof}
	
	Our next goal is to apply Theorem~\ref{thm_vertices_integer_hull} to obtain Theorem~\ref{thm_support_bound}. 
	This involves switching between inequality form and standard form while preserving determinants. 
	To do so, we use a known result concerning orthogonal lattice bases stated below; see \cite[Theorem 4.2]{oxley20222} for a proof and \cite{veselov1980estimates} for an earlier reference, as cited in \cite{gribanov2024delta}. 
	The result also follows from a classical identity which relates the Plücker coordinates of a Grassmannian to the Plücker coordinates of the dual Grassmannian; cf., for instance, \cite[Book III, Chapter XIV, Theorem I]{Hodge_Pedoe_1994}. 
	In the statement, we write $\bar{I}:=\lbrack n\rbrack \backslash I$ for the complement of $I\subseteq\lbrack n\rbrack$. Also, the expression $\gcd\bA$ denotes the greatest common divisor of the full rank subdeterminants of $\bA$. Note, for the remainder of this section, $\bA\in\Z^{m\times n}$ is a matrix of full row rank instead of full column rank.
	
	\begin{lemma}
		\label{lemma_orthogonal_lattices}
		Let $\bA\in\Z^{m\times n}$ have full row rank. Let $\bW\in\Z^{n\times (n-m)}$ have full column rank such that $\bA\cdot \bW = \bm{0}$. Then we have
		\begin{align*}
			1/\gcd\bA\cdot\left|\det\bA_{\cdot, I}\right| = 1/\gcd\bW\cdot\left|\det\bW_{\bar{I},\cdot}\right| 
		\end{align*}
		for all $I\subseteq \lbrack n\rbrack$ with $|I| = m$.
	\end{lemma}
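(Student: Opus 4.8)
The plan is to recast the claimed identity as an equality of sublattice indices and then produce one explicit group isomorphism that forces it. Write $\Lambda:=\ker\bA\cap\Z^n$ for the saturated kernel lattice; since $\bA$ has full row rank, $\Lambda$ has rank $n-m$, its elements over $\R$ are exactly the span of the columns of $\bW$, and $\bW\Z^{n-m}\subseteq\Lambda$ is a finite-index sublattice. I will use two standard consequences of the Smith normal form: $[\Z^m:\bA\Z^n]=\gcd\bA$, and $[\Lambda:\bW\Z^{n-m}]=\gcd\bW$ (the index of a lattice in its saturation equals the gcd of the maximal minors of any generating matrix); a citation along the lines of \cite{schrijvertheorylinint86} or a one-line argument via elementary divisors covers both. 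I also repeatedly use that $|\det\bM|=[\Z^k:\bM\Z^k]$ for invertible $\bM\in\Z^{k\times k}$.

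First I would dispose of the case $\det\bA_{\cdot,I}=0$. Then the columns of $\bA$ indexed by $I$ are linearly dependent, so there is a nonzero $\bv\in\R^n$ supported on $I$ with $\bA\bv=\bm{0}$; writing $\bv=\bW\bu$ with $\bu\neq\bm{0}$ (possible since the columns of $\bW$ span $\ker\bA$), we get $\bW_{\bar I,\cdot}\bu=\bv_{\bar I}=\bm{0}$, so $\det\bW_{\bar I,\cdot}=0$ as well and both sides vanish. So from now on assume $\bA_{\cdot,I}$ --- equivalently $\bW_{\bar I,\cdot}$ --- is invertible.

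Fix such an $I$ and let $\pi_{\bar I}\colon\Z^n\to\Z^{\bar I}$ be the coordinate projection. Invertibility of $\bA_{\cdot,I}$ yields $\Lambda\cap\Z^I=\{\bm{0}\}$, so $\pi_{\bar I}$ is injective on $\Lambda$; hence $\pi_{\bar I}(\Lambda)$ is a full-rank sublattice of $\Z^{\bar I}$ and the injectivity transports the index, giving $[\pi_{\bar I}(\Lambda):\bW_{\bar I,\cdot}\Z^{n-m}]=[\Lambda:\bW\Z^{n-m}]=\gcd\bW$, i.e.\ $|\det\bW_{\bar I,\cdot}|=\gcd\bW\cdot[\Z^{\bar I}:\pi_{\bar I}(\Lambda)]$. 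The crux is then the equality $[\Z^{\bar I}:\pi_{\bar I}(\Lambda)]=[\bA\Z^n:\bA_{\cdot,I}\Z^I]$, which I would get from the homomorphism $\psi\colon\Z^{\bar I}\to\bA\Z^n/\bA_{\cdot,I}\Z^I$, $\bw\mapsto\bA_{\cdot,\bar I}\bw+\bA_{\cdot,I}\Z^I$: it is onto because $\bA\Z^n=\bA_{\cdot,I}\Z^I+\bA_{\cdot,\bar I}\Z^{\bar I}$, and unwinding definitions shows $\ker\psi=\pi_{\bar I}(\Lambda)$ (the condition $\bA_{\cdot,\bar I}\bw\in\bA_{\cdot,I}\Z^I$ is precisely the existence of some $\bz\in\Lambda$ with $\bz_{\bar I}=\bw$). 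The first isomorphism theorem gives the displayed equality, and since $[\bA\Z^n:\bA_{\cdot,I}\Z^I]=[\Z^m:\bA_{\cdot,I}\Z^I]/[\Z^m:\bA\Z^n]=|\det\bA_{\cdot,I}|/\gcd\bA$, comparing the two expressions for $[\Z^{\bar I}:\pi_{\bar I}(\Lambda)]$ produces $|\det\bW_{\bar I,\cdot}|/\gcd\bW=|\det\bA_{\cdot,I}|/\gcd\bA$, which is the claim.

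I expect no deep obstacle: the real work is bookkeeping --- tracking the tower $\bA_{\cdot,I}\Z^I\subseteq\bA\Z^n\subseteq\Z^m$ against $\bW_{\bar I,\cdot}\Z^{n-m}\subseteq\pi_{\bar I}(\Lambda)\subseteq\Z^{\bar I}$ and checking that every index is finite exactly when $\bA_{\cdot,I}$ is invertible --- together with being willing to quote the two Smith-normal-form facts. If one prefers to avoid dividing by the gcds, an equivalent route is to first normalize to the primitive case $\gcd\bA=\gcd\bW=1$ by replacing the rows of $\bA$ with a $\Z$-basis of the saturation of their integer span and the columns of $\bW$ with a $\Z$-basis of $\Lambda$ --- this scales the two determinants by $1/\gcd\bA$ and $1/\gcd\bW$ and preserves $\bA\bW=\bm{0}$ --- and then run the $\psi$-argument with $\bA\Z^n=\Z^m$; I would present whichever version is shorter to write out.
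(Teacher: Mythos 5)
The paper does not prove Lemma~\ref{lemma_orthogonal_lattices}; it cites it to Oxley--Walsh, to Veselov, and to the Pl\"ucker coordinate duality for Grassmannians. So there is no in-paper proof for your argument to coincide with, and your job here is really to supply a correct self-contained proof --- which you do. Your lattice-index argument is sound: the degenerate case is handled cleanly via a nonzero kernel vector supported on $I$; in the nondegenerate case, the facts $[\Z^m:\bA\Z^n]=\gcd\bA$ and $[\Lambda:\bW\Z^{n-m}]=\gcd\bW$ are the standard Smith-normal-form identities; injectivity of $\pi_{\bar I}$ on $\Lambda$ follows from $\bA_{\cdot,I}$ being invertible (a vector in $\Lambda$ supported on $I$ must be zero); the homomorphism $\psi\colon\Z^{\bar I}\to\bA\Z^n/\bA_{\cdot,I}\Z^I$ is onto with kernel exactly $\pi_{\bar I}(\Lambda)$, and the first isomorphism theorem plus multiplicativity of indices in the tower $\bA_{\cdot,I}\Z^I\subseteq\bA\Z^n\subseteq\Z^m$ closes the computation. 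Compared with the references the paper leans on, your route is more elementary and self-contained --- it avoids Pl\"ucker/Grassmannian machinery and exterior algebra in favor of pure group/lattice bookkeeping --- at the cost of somewhat more explicit verification of finiteness and index-transport facts. This is a perfectly reasonable trade and would serve as a clean replacement for the citation if the authors wanted the paper to be self-contained.
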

	Let us briefly discuss this result. Denote by $\ba_1^\top,\ldots,\ba_m^\top$ the rows of $\bA$, similarly by $\bw_1,\ldots,\bw_{n-m}$ the columns of $\bW$, and let $L:=\lin\lbrace \ba_1,\ldots,\ba_m\rbrace$ be the linear space spanned by $\ba_1,\ldots,\ba_m$. The assumption $\bA\cdot\bW = \bm{0}$ in Lemma~\ref{lemma_orthogonal_lattices} states that the linear space $\lin\lbrace\bw_1,\ldots,\bw_{n-m}\rbrace$ is orthogonal to $L$, that is, $L^\perp = \lin\lbrace\bw_1,\ldots,\bw_{n-m}\rbrace$. Observe that Lemma~\ref{lemma_orthogonal_lattices} applies to all bases of $L^\perp$ given by integer vectors. This gives us the freedom to choose a suitable basis.  
	In the following proof, we select $\bw_1,\ldots,\bw_{n-m}$ to be a basis of the lattice $L^\perp\cap\Z^n$, which implies $\gcd\bW = 1$ and therefore that $\bW$ is a $(\Delta/\gcd\bA)$-modular matrix provided that $\bA$ is $\Delta$-modular. 
	
	\begin{proof}[Proof of Theorem~\ref{thm_support_bound}]
		Let $\bz^*$ be a solution of \eqref{ilp_standard_form} such that $\bz^*$ is a vertex of the integer hull of $\mathcal{S}:= \lbrace \bx\in\R^n: \bA\bx=\bb,\bx\geq \bm{0} \rbrace$. Consider the polyhedron $\bz^* - \mathcal{S} = \lbrace \bx\in\R^n : \bA\bx = \bm{0}, \bx\leq\bz^*\rbrace$. Observe that $\bm{0}$ is a vertex of the integer hull of $\bz^* - \mathcal{S}$ as $\bz^*$ is a vertex of the integer hull of $\mathcal{S}$. Let $L$ be the linear space spanned by the rows of $\bA$. We apply Lemma~\ref{lemma_orthogonal_lattices}: Choose $\bW\in\Z^{n\times (n-m)}$ such that the columns of $\bW$ form a basis of $L^\perp\cap\Z^n$. By Lemma~\ref{lemma_orthogonal_lattices}, the matrix $\bW$ satisfies $\gcd\bW = 1$ and is a $(\Delta/\gcd\bA)$-modular matrix. We set $\delta := \Delta/\gcd\bA$. 
		Consider the polyhedron $\poly := \lbrace\by\in\R^{n-m}: \bW\by\leq\bz^*\rbrace$. The right hand side of $\poly$ corresponds one-to-one to vectors in $\bz^* - \mathcal{S}$. More precisely, 
		the linear map defined by $\by \mapsto \bW\by$ is an isomorphism that maps $\poly$ onto $\mathcal{S}$ and its restriction to $\Z^{n-m}$ maps one-to-one to $L^\perp\cap\Z^n$ as $\gcd\bW = 1$. 
		Therefore, $\bm{0}\in\poly$ is a vertex of the integer hull of $\poly$. From Theorem~\ref{thm_vertices_integer_hull}, it follows that $\bm{0}$ lies on a face of $\poly$ of dimension at most $F(\delta)$. So there are at least $(n - m) - F(\delta)$ tight inequalities, indexed by elements in $I\subseteq\lbrack n\rbrack$, such that $\bm{0} = \bW_I \bm{0} = \bz^*_I$. We use this to get
		\begin{align*}
			\left|\supp(\bz^*)\right|\leq n - \left|I\right| = n - \left((n - m) - F(\delta)\right) = m + F(\delta)\leq m + F(\Delta)
		\end{align*} 
		as $\delta\leq \Delta$ and thus $F(\delta)\leq F(\Delta)$ by the monotonicity of $F(\Delta)$.
	\end{proof}
	
	It is  possible to use Lemma~\ref{lemma_orthogonal_lattices} to transfer the construction of the lower bound in Theorem~\ref{thm_shortest_vector} into standard form. However, this requires some minor technical modifications. Hence we present below a  concrete example and give an ad-hoc proof that no integer solution with fewer than $m + \Delta - 1$ non-zero entries exists.
	
	\begin{proof}[Proof of Proposition~\ref{prop_lower_bound_sparsity}]		
		Let $m = (\Delta - 1)^2 + 1$ and $n = m + \Delta - 1$. By $\bm{I}_l$, we denote the $l\times l$ unit matrix and $\bm{1}_l$ is the all-ones vector with $l$ entries for $l\in\N$. Consider the following system of linear equations
		\begin{align*}
			\underbrace{\begin{pmatrix}
					\bm{I}_{\Delta - 1} & \bm{I}_{\Delta - 1} &  &  \\
					\bT & &  \bm{I}_{m - \Delta} &  \\ 
					-\bm{1}_{\Delta - 1}^\top &  &  & \Delta		
			\end{pmatrix}}_{=:\bA}\bx = \underbrace{\begin{pmatrix}
					2\cdot\bm{1}_{\Delta - 1} \\ 
					\bm{1}_{m - \Delta} \\
					1
			\end{pmatrix}}_{=:\bb},
		\end{align*}
		where $\bT\in \Z^{(m - \Delta)\times (\Delta - 1)}$ is the node-arc incidence matrix of a complete directed graph with $\Delta - 1$ nodes. First observe that $\bA\in\Z^{m\times n}$ has full row rank as the last $m$ columns give an invertible submatrix. Next we claim that the matrix $\bA$ is totally $\Delta$-modular: 
		Since $\bT$ is a arc-node incidence matrix, we know that $\bT$ is totally unimodular; cf. \cite[Chapter 19]{schrijvertheorylinint86}. Adding unit vector as rows and then unit vector columns to $\bT$ preserves totally unimodularity. So we obtain that the submatrix $\bA'$ given by the first $m-1$ rows of $\bA$ is totally unimodular. Consider an $m\times m$ submatrix $\bB$ of $\bA$ that contains the last column. Applying Laplace expansion along the last column and the fact that $\bA'$ is totally unimodular tells us that the determinant of $\bB$ equals $0$ or $\pm \Delta$. Similarly, if $\bB$ is an $m\times m$ submatrix of $\bA$ that does not contain the last column of $\bA$, we can apply Laplace expansion along the last row of $\bB$ and obtain that $\left|\det\bB\right|\leq \Delta - 1$ as $\bA'$ is totally unimodular. We established that $\bA$ is $\Delta$-modular. To see that $\bA$ is totally $\Delta$-modular, just append the last unit vector to $\bA$ and observe that the new matrix remains $\Delta$-modular. However, since the resulting matrix contains a unit matrix and is $\Delta$-modular, all $k\times k$ subdeterminants are bounded by $\Delta$ in absolute value for $k\in\lbrack m\rbrack$.
		
		We continue with showing that no sparse integer-valued solution exists. 
		The all-ones vector $\bm{1}$ is a non-negative integer solution to $\bA\bx=\bb$. For the remainder of this section, we show that there is no other non-negative integer solution, which implies the claim as $\left|\supp(\bm{1})\right| = n = m + \Delta - 1$. 
		Suppose that $\bz\in\Z^{n}_{\geq 0}$ satisfies $\bA\bz = \bb$. Our first claim is that $z_n = 1$. Consider the last equation of $\bA\bz = \bb$, which states 
		\begin{align*}
			\Delta\cdot z_n - 1= z_1 + \ldots + z_{\Delta - 1}.
		\end{align*}
		If $z_n = 0$, then the right hand side has to be negative, which is not possible. So we have $z_n\geq 1$. If $z_n\geq 2$, we get that $2\Delta - 1 \leq z_1 + \ldots z_{\Delta - 1}$. By averaging, we know that there has to be an index $i\in\lbrack \Delta - 1\rbrack$ such that $z_i\geq 3$. Consider the $i$-th equation of $\bA\bz = \bb$. This equation tells us
		\begin{align*}
			2 = z_i + z_{\Delta - 1 + i} \geq  3 + z_{\Delta - 1 + i},
		\end{align*}
		which implies that $z_{\Delta - 1 + i}$ is negative, a contradiction. So we conclude that $z_n = 1$. Our next claim is that $z_i \geq 1$ for all $i\in\lbrack \Delta - 1\rbrack$. Suppose that $z_i = 0$ for some $i\in\lbrack \Delta - 1\rbrack$. Since $\Delta - 1 = z_1 + \ldots + z_{\Delta - 1}$ by the last equation, we get from averaging again that there exists $j\in\lbrack \Delta - 1\rbrack$ such that $z_j\geq 2$. Consider the equation $z_j - z_i + z_k = 1$ for some $k\in\lbrace 2\Delta - 1,\ldots, n -1\rbrace$, which corresponds to the rows of $\bA$ that contain the arc-node incidence matrix $\bT$. We get
		\begin{align*}
			1 = z_j - z_i + z_k \geq 2 + z_k,
		\end{align*}
		a contradiction to $z_k$ being non-negative. Hence, we deduce that $z_i\geq 1$. This holds for all $i\in\lbrack \Delta - 1\rbrack$. As $\Delta - 1 = z_1 + \ldots + z_{\Delta - 1}\geq \Delta - 1$, we have $z_i = 1$ for all $i\in\lbrack \Delta - 1\rbrack$. Plugging in the value $1$ for each $z_1,\ldots,z_{\Delta - 1},z_n$ and rearranging the corresponding columns to the right hand side in $\bA\bz = \bb$ leaves us with the equations $z_{l} = 1$ for all $l\in\lbrace \Delta,\ldots,n-1\rbrace$. We conclude that $\bz = \bm{1}$ is the only integer valued solution for $\bA\bx=\bb$ with $\bx\in\Z^n_{\geq 0}$.
	\end{proof}
	
	\section{An Algorithm for the \eqref{svp}}
	\label{sec_algorithm}
	This section describes how to turn the proof of Theorem~\ref{thm_shortest_vector} into an algorithm for solving \eqref{svp}. Recall that we assume $n\geq \upperboundtext + 1$ throughout this section. We abbreviate $g(\Delta) := \upperboundtext$.
	
	To illustrate the idea, suppose that $\bB$ is an invertible full rank submatrix of $\bA$ with determinant $\Delta$ in absolute value. Let $\bB^{-1} = (\br_1,\ldots,\br_n)$. We generate the following test vectors based on the three steps in the proof of Theorem~\ref{thm_shortest_vector}. 
	Step~1 implies that 
	\begin{enumerate}
		\item $\br_i\in\Z^n\backslash\lbrace \bm{0}\rbrace$ with $i\in\lbrack n\rbrack$
	\end{enumerate}
	gives a shortest lattice vector $\bA\br_i$. If there exist two columns $\br_i,\br_j\notin \Z^n$ such that 
	\begin{enumerate}
		\setcounter{enumi}{1}
		\item $\br_i+\br_j,\br_i - \br_j\in\Z^n\backslash\lbrace \bm{0}\rbrace$ for different $i,j\in\lbrack n\rbrack$,
	\end{enumerate}
	then Step~2 ensures that either $\bA(\br_i - \br_j)$ or $\bA(\br_i + \br_j)$ is a shortest lattice vector. Otherwise, we are in Step~3. In this case, there exist $\Delta$ columns of $\pm\bB^{-1}$ that are contained in the same residue class in $\bB^{-1}\Z^n$. Let $\br_1,\ldots,\br_{\Delta}$ denote these columns.
	Then one of the following test vectors
	\begin{enumerate}
		\setcounter{enumi}{2}
		\item $\br_i - \br_j\in\Z^n\backslash\lbrace \bm{0}\rbrace$ for all $i,j\in\lbrack \Delta\rbrack$ with $i\neq j$ and
		\item $\br_1+\ldots+\br_{\Delta}\in\Z^n\backslash\lbrace \bm{0}\rbrace$
	\end{enumerate}
	corresponds to a shortest lattice vector. The remaining issue is how to obtain such a submatrix $\bB$? Unfortunately, the task of finding a submatrix $\bB$ with largest subdeterminant in polynomial time for fixed $\Delta$ is a major open problem. 
	To circumvent this difficulty, we consider a sequence of invertible submatrices $\bB_{(l)}$ and generate for each $\bB_{(l)}$ the four types of test vectors presented above. 
	The key insight is that either one of these test vectors already corresponds to a shortest lattice vector or they jointly provide a certificate that there exists a full rank submatrix $\bB_{(l+1)}$ of $\bA$ with $\left|\det\bB_{(l+1)}\right| > \left|\det\bB_{(l)}\right|$. This observation gives rise to an iterative procedure. 
	This procedure either terminates with a shortest lattice vector or it generates a submatrix whose determinant is larger than $\Delta$ in absolute value. 
	The latter output is a contradiction if we suppose that $\bA$ is $\Delta$-modular. Hence, the procedure can be viewed as a partial recognition algorithm for testing whether $\bA$ is $\Delta$-modular.
	
	We next introduce some notation and definitions for a lighter presentation of the algorithm. Let $n\geq g(\Delta) + 1$ and $l\geq 0$ be the current iterate. Let $\bB_{(l)}\in\Z^{n\times n}$ be an invertible matrix with $\left|\det\bB_{(l)}\right|\leq \Delta$ and $\bB^{-1}_{(l)} = (\br^{(l)}_1,\ldots,\br^{(l)}_n)$. 
	Suppose that $\br^{(l)}_i\notin\Z^n$ for all $i\in\lbrack n\rbrack$. Let 
	\begin{align*}
		I_{(l)} := \left\lbrace (\br^{(l)}_i,\br^{(l)}_j) : \br^{(l)}_i + \br^{(l)}_j, \br^{(l)}_i - \br^{(l)}_j \in\Z^n\backslash\lbrace \bm{0}\rbrace\right\rbrace	
	\end{align*}
	be the set of test vectors corresponding to Step~2 in the proof of Theorem~\ref{thm_shortest_vector}. For Step~3, consider the set $\lbrace \bh^{(l)}_{j_1},\ldots,\bh^{(l)}_{j_{\Delta}}\rbrace = H_{(l)}\subseteq \pm\lbrace \br^{(l)}_1,\ldots,\br^{(l)}_n\rbrace$ defined by
	\begin{center}
		$\bh^{(l)}_{j_i} - \bh^{(l)}_{j_k}\in\Z^n$ for all $i,k\in\lbrack \Delta\rbrack$ and at most $\br_i$ or $-\br_i$ is in $H_{(l)}$ for all $i \in \lbrack n\rbrack$.
	\end{center}
	Recall that Lemma~\ref{lemma_pigeonhole} together with $n\geq g(\Delta) + 1$, $\br^{(l)}_i\notin\Z^n$ for all $i\in\lbrack n\rbrack$, and $I_{(l)} =\emptyset$ ensure the existence of $H_{(l)}$, though, it is in general not unique. Furthermore, let 
	\begin{align*}
		J_{(l)} := \left\lbrace j\in\lbrack n\rbrack : \br^{(l)}_j\in H_{(l)} \text{ or }-\br^{(l)}_j\in H_{(l)}\right\rbrace = \lbrace j_1,\ldots,j_\Delta\rbrace
	\end{align*}
	be the set of indices that correspond to columns of $\pm\bB^{-1}_{(l)}$ that are contained in $H_{(l)}$. Observe that $|J_{(l)}| = |H_{(l)}| = \Delta$ since $H_{(l)}$ contains at most one of $-\br^{(l)}_i$ and $\br^{(l)}_i$ for all $i\in\lbrack n\rbrack$. 
	Finally, the set $H_{(l)}$ allows us to define a set of test vectors   
	\begin{align*}
		T_{(l)} := \left\lbrace \bh - \bh' : \bh,\bh'\in H_{(l)}\right\rbrace\backslash\lbrace\bm{0}\rbrace\cup \left\lbrace\sum_{k=1}^{\Delta}\bh^{(l)}_{j_k}\right\rbrace.
	\end{align*}
	Some of the test vectors in $T_{(l)}$ are used explicitly during the algorithm. We denote them by 
	\begin{align*}
		\bt_k := \bh^{(l)}_{j_k} - \bh^{(l)}_{j_{k+1}}\text{ for }k\in\lbrack \Delta - 1\rbrack \text{ and }\bs := \sum_{k=1}^{\Delta}\bh^{(l)}_{j_k}.
	\end{align*}
	Equipped with this notation, the algorithm is given in Algorithm~\ref{algorithm_main}.
	
	\begin{algorithm}[h]
		\caption{Polynomial Time Algorithm for the \eqref{svp} when $n\geq g(\Delta) + 1$}
		\label{algorithm_main}
		\begin{algorithmic}[1]
			\Statex \textbf{Input:} Full column rank matrix $\bA\in\Z^{m\times n}$, $n\geq g(\Delta) + 1$, and $\Delta\in \N_{\geq 1}$.
			
			\Statex \textbf{Output:} Either $\by\in\bA\Z^n$ such that $\Vert \by\Vert_{\infty} = 1$ or a full rank submatrix $\bB$ of $\bA$ with $\left|\det\bB\right| > \Delta$.
			
			\State\label{alg_initial}Find some invertible full rank submatrix $\tilde{\bB}$ of $\bA$. Initialize $l=0$, $\bB_{(0)} := \tilde{\bB}$.
			
			\State\label{alg_compute_det}If $\left|\det\bB_{(l)}\right| > \Delta$, return $\bB_{(l)}$.
			
			\State\label{alg_initial_inverse}Calculate $\bB_{(l)}^{-1} = (\br^{(l)}_1,\ldots,\br^{(l)}_n)$. 
			
			\State\label{alg_update_scalar}If $|\ba^\top_k\br^{(l)}_j|> 1$ for some $k\in\lbrack m\rbrack, j\in\lbrack n\rbrack$, replace the $j$-th row of $\bB_{(l)}$ with $\ba^\top_k$. Set this matrix to be $\bB_{(l+1)}$, increment $l$ and go to \ref{alg_compute_det}.
			
			\State\label{alg_integer_ray}If $\br^{(l)}_j\in\Z^n$ for $j\in\lbrack n\rbrack$, return $\by := \bA\br^{(l)}_j$.
			
			\State\label{alg_involutions_initial}Compute $I_{(l)}$ as described above.
			
			\State\label{alg_update_involutions}If $I_{(l)}\neq \emptyset$, let $(\br^{(l)}_i,\br^{(l)}_j)\in I_{(l)}$ and set $\bt_+ := \br^{(l)}_i + \br^{(l)}_j$ and $\bt_- := \br^{(l)}_i - \br^{(l)}_j$. If $-\bm{1}\leq\bA\bt_+\leq \bm{1}$, return $\by:= \bA\bt_+$, if $-\bm{1}\leq\bA\bt_-\leq \bm{1}$, return $\by := \bA\bt_-$. Otherwise, collect, for $\bt_+$ and $\bt_-$ rows $\ba^\top_{+}$ and $\ba^\top_{-}$ such that $|\ba^\top_{+}\bt_+|\geq 2$ and $|\ba^\top_{-}\bt_-|\geq 2$. Replace the rows of $\bB_{(l)}$ indexed by $i$ and $j$ with rows $\ba^\top_{+}$ and $\ba^\top_{-}$. Set this matrix to be $\bB_{(l+1)}$, increment $l$ and go to \ref{alg_compute_det}.
			
			\State\label{alg_compute_sets}Compute $H_{(l)}$, $J_{(l)}$, and $T_{(l)}$ as described above.
			
			\State\label{alg_check_short_vector}If $-\bm{1}\leq\bA\bt\leq \bm{1}$ for $\bt\in T_{(l)}$, return $\by := \bA\bt$. 
			Otherwise, collect, for every $\bt \in T_{(l)}$ a row $\ba_{\bt}^\top$ of $\bA$ such that $|\ba^\top_{\bt}\bt|\geq 2$.
			
			\State\label{alg_update_difference}If $\ba^\top_{\bt_{k}}$ does not satisfy \eqref{proof_main_diff_cut_off} for $k\in\lbrack \Delta - 1\rbrack$, compute $i\in\lbrace j_k,j_{k+1}\rbrace$, $j\in J_{(l)}\backslash\lbrace j_k,j_{k+1}\rbrace$ such that $\sign(\ba^\top_{\bt_k}\bh^{(l)}_{i}) = \sign(\ba^\top_{\bt_k}\bh^{(l)}_{j})$. Replace the rows of $\bB_{(l)}$ indexed by $i,j$ with the rows $\ba^\top_{\bt_k}$ and $\ba^\top_{\bh^{(l)}_{i}-\bh^{(l)}_{j}}$. 
			Set this matrix to be $\bB_{(l+1)}$, increment $l$ and go to \ref{alg_compute_det}.
			
			\State\label{alg_update_sum}Replace the rows of $\bB_{(l)}$ indexed by $J_{(l)}$ with the rows $\ba_{\bt_1}^\top,\ldots,\ba_{\bt_{\Delta - 1}}^\top,\ba^\top_{\bs}$. Set this matrix to be $\bB_{(l+1)}$, increment $l$ and go to \ref{alg_compute_det}.
		\end{algorithmic}
	\end{algorithm}
		
	\begin{proof}[Proof of Theorem~\ref{thm_svp_algorithm}]
		The correctness of Algorithm~\ref{algorithm_main} follows by construction. We show that it terminates after finitely many iterations and give a running time analysis. Here, a ``step'' refer to steps of Algorithm~\ref{algorithm_main} and not the three steps in the proof of Theorem~\ref{thm_shortest_vector}.
		
		\textbf{Termination:} It suffices to check that we increment index $l$ finitely many times. In fact, we will prove that $l\leq\Delta$. To reach a new increment, we have to update the matrix $\bB_{(l)}$. Our claim is that $1 + \left|\det\bB_{(l)}\right|\leq\left|\det\bB_{(l+1)}\right|$ for all $l\geq 0$. Assuming this holds, we obtain
		\begin{align*}
			\left|\det\bB_{(\Delta)}\right| \geq 1 + \left|\det\bB_{(\Delta - 1)}\right|\geq \ldots \geq \Delta + \left|\det\bB_{(0)}\right| \geq \Delta + 1 
		\end{align*}  
		since $\bB_{(0)}$ is invertible and integer-valued. Hence, if $l = \Delta$, the algorithm terminates with a full rank submatrix of $\bA$ with determinant larger than $\Delta$ in absolute value. 
		It remains to show that $$1 + \left|\det\bB_{(l)}\right|\leq\left|\det\bB_{(l+1)}\right|$$ for all $l\geq 0.$  
		We update the matrix $\bB_{(l)}$ in Steps~\ref{alg_update_scalar}, \ref{alg_update_involutions}, \ref{alg_update_difference}, and \ref{alg_update_sum}. In each of these steps we consider a submatrix of $\bA\cdot\bB_{(l)}^{-1}$ of the form $(\bA\cdot\bB_{(l)}^{-1})_{K,I}$, where $K\subseteq \lbrack m\rbrack$ is the index set of new rows of $\bA$ that we add to obtain $\bB_{(l+1)}$ and $I\subseteq \lbrack n\rbrack$ is the index set of rows of $\bB_{(l)}$ that we replace. In each of the three steps, we claim that $1<|\det(\bA\cdot\bB_{(l)}^{-1})_{K,I}|$ holds. Suppose that this is true. Then Lemma~\ref{lemma_submatrix_determinants} implies that 
		\begin{align*}
			1 < \left|\det(\bA\cdot\bB^{-1}_{(i)})_{K,I}\right| = \frac{\left|\det\bB_{(l+1)}\right|}{\left|\det\bB_{(l)}\right|} 
		\end{align*}
		and, therefore, $1 + \left|\det\bB_{(l)}\right|\leq\left|\det\bB_{(l+1)}\right|$ follows as both determinants are integers. 
		It remains to verify the inequality 
		\begin{align}
			\label{proof_alg_inequality}
			1<\left|\det(\bA\cdot\bB_{(l)}^{-1})_{K,I}\right|
		\end{align}
		for each update in Steps~\ref{alg_update_scalar}, \ref{alg_update_involutions}, \ref{alg_update_difference}, and \ref{alg_update_sum}. 
		
		In Step~\ref{alg_update_scalar}, the construction ensures that $|\det(\bA\cdot\bB_{(l)}^{-1})_{K,I}| = |\ba^\top_k\br^{(l)}_j|> 1$, which immediately settles this case. 
		Consequently, we assume that $-\bm{1}\leq \bA\br^{(l)}_i\leq \bm{1}$ for all $i\in\lbrack n\rbrack$ in the following steps and the proof of Theorem~\ref{thm_shortest_vector} applies. 
		For the updates in Step~\ref{alg_update_involutions} and \ref{alg_update_difference}, the matrix $(\bA\cdot\bB_{(l)}^{-1})_{K,I}$ corresponds to the submatrices in \eqref{proof_main_diff_large_determinant_involution} and \eqref{proof_main_diff_large_determinant}, whose determinants in absolute value equal $2$. In Step~\ref{alg_update_sum}, the submatrix $(\bA\cdot\bB_{(l)}^{-1})_{K,I}$ is given by the matrix \eqref{proof_main_final_determinant}, whose determinant has an absolute value of at least $2$. 

		\textbf{Running time:}
		Step~\ref{alg_initial} takes $\mathcal{O}(mn^2)$ time using Gauss-Jordan elimination. The next steps, Steps~\ref{alg_compute_det} to \ref{alg_integer_ray}, can be done in time $\mathcal{O}(mn^2)$ using Gauss-Jordan elimination as well and standard matrix multiplication. Computing the involved sets and checking whether a test vector is a shortest lattice vector in Steps~\ref{alg_involutions_initial} to \ref{alg_check_short_vector} requires $\mathcal{O}(mn\Delta^2)$ time since $|T_{(l)}| = \mathcal{O}(\Delta^2)$. In Step~\ref{alg_update_difference}, we need to check $\mathcal{O}(\Delta)$ rows of $\bA$ for property \eqref{proof_main_diff_cut_off}. The test for property \eqref{proof_main_diff_cut_off} takes $\mathcal{O}(\Delta)$ time. Once this is accomplished, we need to perform the updates in Step~\ref{alg_update_difference} and Step~\ref{alg_update_sum}. Together, this can be done in time $\mathcal{O}(\Delta^2)$. Combining everything and using the fact that $0\leq l \leq \Delta$, the total running time equals
		\begin{align*}
			\mathcal{O}(mn^2) + (\Delta + 1)\cdot\left(\mathcal{O}(mn^2) + \mathcal{O}(mn\Delta^2) + \mathcal{O}(\Delta^2)\right) = \mathcal{O}(mn^2\Delta^3).
		\end{align*}
	\end{proof}
	\noindent
	\textbf{Acknowledgements.} The authors thank the reviewers for their valuable comments, which have led to an improved upper bound on $f(\Delta)$, and Joe Paat for helpful remarks on an earlier version of this paper. 
	
	\bibliography{references}
\end{document}